\newcommand{\longhookrightarrow}{\mathrel{\lhook\joinrel\relbar\joinrel\rightarrow}}
\newcommand{\jbk}[1]{\left\langle {#1} \right\rangle}
\newcommand{\df}{\mathrm{d}}
\newcommand{\nv}{\nu}
\newcommand{\p}{\partial}
\newcommand{\Rbb}{\mathbb{R}}
\newcommand{\Acal}{\mathcal{A}}
\newcommand{\Hcal}{\mathcal{H}}
\newcommand{\Kcal}{\mathcal{K}}
\newcommand{\Lcal}{\mathcal{L}}
\newcommand{\Scal}{\mathcal{S}}
\newcommand{\Ga}{\alpha}
\newcommand{\GO}{\Omega}
\newcommand{\freespace}{\vspace{4mm}}
\numberwithin{equation}{section}
\newtheorem{prop}{Proposition}[section]
\newtheorem{theo}[prop]{Theorem}
\newtheorem{coro}[prop]{Corollary}
\newtheorem{lemm}[prop]{Lemma}
\theoremstyle{definition}
\newtheorem*{claim*}{Claim}
\newtheorem*{exam*}{Example}
\newtheorem*{rema*}{Remark}
\author{Shota Fukushima\thanks{Corresponding author. Department of Mathematics and Institute of Applied Mathematics, Inha University, 100 Inha-ro, Incheon 22212, S. Korea. Email: \texttt{shota.fukushima.math@gmail.com}} \and Hyeonbae Kang\thanks{Department of Mathematics and Institute of Applied Mathematics, Inha University, 100 Inha-ro, Incheon 22212, S. Korea. Email: \texttt{hbkang@inha.ac.kr}} \and Yoshihisa Miyanishi\thanks{Department of Mathematical Sciences, Faculty of Science, Shinshu University, Asahi 3-1-1, Matsumoto 390-8621, Japan. Email: \texttt{miyanishi@shinshu-u.ac.jp}}}
\title{Decay rate of the eigenvalues of the Neumann-Poincar\'e operator\thanks{This work was supported by NRF of S. Korea grant No. 2022R1A2B5B01001445 and JSPS of Japan KAKENHI grants No. 21K13805 and No. 20K03655.}}
\date{\empty}
\begin{document}
\maketitle

\begin{abstract}
       If the boundary of a domain in three dimensions is smooth enough, then the decay rate of the eigenvalues of the Neumann-Poincar\'e operator is known and it is optimal. In this paper, we deal with domains with less regular boundaries and derive quantitative estimates for the decay rates of the Neumann-Poincar\'e eigenvalues in terms of the H\"older exponent of the boundary. Estimates in particular show that the less the regularity of the boundary is, the slower is the decay of the eigenvalues. We also prove that the similar estimates in two dimensions. The estimates are not only for less regular boundaries for which the decay rate was unknown, but also for regular ones for which the result of this paper makes a significant improvement over known results.
\end{abstract}

\noindent{\small \textbf{Key words: }Neumann-Poincar\'e operators, Eigenvalues, Singular values, Schatten class}

\noindent{\small \textbf{MSC 2020: }Primary 47A75; Secondary 47G10}

\tableofcontents

\section{Introduction}

Let $d\geq 1$ and $\Omega\subset \Rbb^{d+1}$ be a bounded domain with the Lipschitz boundary $\partial\Omega$. The Neumann-Poincar\'e operator (abbreviated to NP operator) is the boundary integral operator defined by
\begin{equation}
    \label{eq_np}
    \Kcal^* [f](x):=\frac{1}{\omega_{d+1}}\mathrm{p.v.}\int_{\partial\Omega} \frac{(x-y)\cdot \nv_x}{|x-y|^{d+1}}f(y)\, \df \sigma (y) \quad (x\in \partial\Omega).
\end{equation}
Here $\omega_{d+1}$ is the area of the unit sphere in $\Rbb^{d+1}$, $\nv_x$ is the outward unit normal vector at $x\in \partial\Omega$, and $\mathrm{p.v.}$ stands for the Cauchy principal value. The NP operator can be realized as a self-adjoint operator by introducing a proper inner product on $H^{-1/2}(\partial\Omega)$ \cite{KPS07}, and hence its spectrum on $H^{-1/2}(\partial\Omega)$ consists of essential spectrum and eigenvalues.

If $\partial\Omega$ is $C^{1,\alpha}$ for some $\alpha >0$, then the NP operator is compact on $H^{-1/2}(\partial\Omega)$ (see Corollary \ref{coro_NP_compact_Sobolev}). So it has eigenvalues of finite multiplicities accumulating to $0$.
We denote the $j$-th eigenvalue of $\Kcal^*$ by $\lambda_j=\lambda_j(\Kcal^*)$ which are enumerated in descending order, namely,
\[
    |\lambda_1 |\geq |\lambda_2|\geq \cdots (\to 0).
\]
In three-dimensional case ($d+1=3$), it is proved in \cite{Miyanishi22} that if $\partial\Omega$ is $C^{2, \alpha}$ for some $\alpha>0$, then $|\lambda_j|$ decays at the rate of $j^{-1/2}$. In fact, it is proved that $|\lambda_j|$ exhibits the asymptotic behavior
\[
    |\lambda_j|\sim Cj^{-1/2} \quad (j\to \infty)
\]
with some constant $C>0$ given explicitly in terms of the Willmore energy and the Euler characteristic of $\partial\Omega$. Moreover, the leading order terms of the asymptotic behaviors of positive and negative eigenvalues are obtained in \cite{MR19} when $\partial\Omega$ is smooth. However, the convergence rate of NP eigenvalues for $C^{1,\alpha}$ domains in three-dimensional space is unknown, and it is one of the purposes of this paper to investigate it. It is worth mentioning that the NP operator on the domains with $C^{0,1}$ boundaries, i.e., Lipschitz boundaries is not compact and may not have eigenvalues converging to $0$ as shown in \cite{HP1, HP2} for some special domains in three dimensions and in \cite{BZ,KLY, PP1,PP2} for intersecting discs and general two-dimensional curvilinear domains.

We also deal with the similar problem in two dimensions ($d+1=2$). For the two-dimensional case, it is proved in \cite{Miyanishi-Suzuki17} that, if $\partial\Omega$ is $C^k$ ($k\geq 2$), then the NP eigenvalues converges to $0$ at the rate of $j^{-k+3/2}$. More precisely, it is proved that $|\lambda_j|=o(j^{-k+3/2+\delta})$ as $j\to \infty$ for any $\delta>0$. In \cite{Jung-Lim20}, it is proved that, if $\Omega\subset \Rbb^2$ is a simply connected domain with $C^{k, \alpha}$-boundary for some $k\geq 1$ and $\alpha\in (0, 1]$ such that $k+\alpha>3/2$, then $|\lambda_j|=O(j^{-k-\alpha+3/2})$ as $j\to \infty$. In particular, in the case of $k=1$ and $\alpha>1/2$, the decay estimate $|\lambda_j|=O(j^{-\alpha+1/2})$ holds. So, the case of $C^{1,\alpha}$ with $0<\alpha<1/2$ is missing. Moreover, this result shows that if $k=1$ and $\alpha=1/2$, then the decay rate is $j^0$, which leaves no room for the case of $C^{1, \alpha}$ with $0< \alpha < 1/2$ even though eigenvalues still decay to $0$ in such cases. So, we consider $C^{k,\alpha}$ domains and obtain decay estimates for $k \ge 1$ and $\alpha \in (0,1]$. It turns out that previous results for the case $k+\alpha \ge 3/2$ can be significantly improved. For instance, it can be proved that if $k=1$ and $\alpha=1/2$, then the critical decay is $j^{-1/2}$.

The main result of this paper in three dimensions is the following:

\begin{theo}\label{theo_eigenvalue_decay}
    Let $\Omega$ be a bounded domain in $\Rbb^{3}$ with $C^{1, \alpha}$ boundary $\partial\Omega$ for some $\alpha \in (0,1]$. We have
    \begin{equation}
        \label{eq_eigenvalue_decay}
        |\lambda_j (\Kcal^*)|=o(j^{-\alpha/2+\delta}) \quad (j\to \infty)
    \end{equation}
    for any $\delta>0$.
\end{theo}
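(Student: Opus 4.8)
The plan is to show that $\Kcal^*$ lies in the Schatten $p$-class $\Sfrak_p$ of operators on $L^2(\p\GO)$ for every $p>2/\Ga$, and then to convert this into the stated eigenvalue decay by Weyl's inequality. The starting point is a pointwise kernel bound. Since $\p\GO$ is $C^{1,\Ga}$ and compact, representing $\p\GO$ near a point $x$ as a graph over the tangent plane $T_x\p\GO$ and applying Taylor's theorem with H\"older remainder gives $|(x-y)\cdot\nv_x|\le C|x-y|^{1+\Ga}$, uniformly for $x,y\in\p\GO$ with $|x-y|$ small; hence the integral kernel $K(x,y)$ of $\Kcal^*$ satisfies $|K(x,y)|\le C|x-y|^{-2+\Ga}$ on $\p\GO\times\p\GO$. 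In particular this kernel is weakly singular, so $\Kcal^*$ is a genuine integral operator (no principal value is needed) and is compact on $L^2(\p\GO)$; moreover, a standard bootstrap on the eigenvalue equation $f=\Gl^{-1}\Kcal^*[f]$, using that $\Kcal^*$ improves Sobolev regularity (which follows from the kernel bound), shows that every $H^{-1/2}(\p\GO)$-eigenfunction with nonzero eigenvalue already lies in $L^2(\p\GO)$, so the nonzero eigenvalues $\Gl_j(\Kcal^*)$ coincide whether $\Kcal^*$ is viewed on $H^{-1/2}(\p\GO)$ or on $L^2(\p\GO)$.

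Next I would decompose the kernel dyadically away from the diagonal: write $\Kcal^*=\sum_{k\ge 0}T_k$, where for $k\ge 1$ the operator $T_k$ has kernel $K(x,y)$ on the annulus $\{2^{-k-1}<|x-y|\le 2^{-k}\}$ and $0$ elsewhere, and $T_0$ carries the remaining (bounded) part $\{|x-y|>1/2\}$, so $T_0$ is trivially Hilbert--Schmidt. Using $\Gs(\{y\in\p\GO:|x-y|\le\Gr\})\lesssim\Gr^2$ (the surface $\p\GO$ being $2$-dimensional) together with $|K|\le C|x-y|^{-2+\Ga}$, the Schur test gives the operator-norm bound $\|T_k\|\lesssim 2^{-k\Ga}$, while a direct estimate of the Hilbert--Schmidt norm gives $\|T_k\|_{\Sfrak_2}\lesssim 2^{k(1-\Ga)}$. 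Interpolating between $\Sfrak_2$ and the bounded operators, so that $\|T\|_{\Sfrak_p}\le\|T\|_{\Sfrak_2}^{2/p}\|T\|^{1-2/p}$ for $p\ge 2$, we obtain
\begin{equation}
  \|T_k\|_{\Sfrak_p}\lesssim\bigl(2^{k(1-\Ga)}\bigr)^{2/p}\bigl(2^{-k\Ga}\bigr)^{1-2/p}=2^{k(2/p-\Ga)},
\end{equation}
which is summable in $k\ge 0$ precisely when $p>2/\Ga$. Since $p>2/\Ga\ge 2$, the $\Sfrak_p$-norm is a genuine norm, and the triangle inequality yields $\|\Kcal^*\|_{\Sfrak_p}\le\sum_{k\ge 0}\|T_k\|_{\Sfrak_p}<\infty$; thus $\Kcal^*\in\Sfrak_p$ for every $p>2/\Ga$.

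To conclude, I invoke Weyl's inequality: for any compact operator $T$ on a Hilbert space, $\sum_j|\Gl_j(T)|^p\le\sum_j s_j(T)^p$. Applied to $\Kcal^*$ on $L^2(\p\GO)$, this gives $\sum_j|\Gl_j(\Kcal^*)|^p\le\|\Kcal^*\|_{\Sfrak_p}^p<\infty$ for every $p>2/\Ga$; since $(|\Gl_j(\Kcal^*)|)_j$ is non-increasing, convergence of this series forces $|\Gl_j(\Kcal^*)|=o(j^{-1/p})$. Letting $p\downarrow 2/\Ga$ gives $|\Gl_j(\Kcal^*)|=o(j^{-\Ga/2+\Gd})$ for every $\Gd>0$, which is the assertion. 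I expect the main obstacle to be purely technical: making the estimate $|(x-y)\cdot\nv_x|\lesssim|x-y|^{1+\Ga}$ genuinely uniform on $\p\GO$ (a compactness argument for the local graph charts, with attention to the fact that $\nv$ is evaluated at $x$), and the bookkeeping that identifies the $L^2$- and $H^{-1/2}$-eigenvalues. Note also that the $\Gd$-loss is intrinsic to this scheme, since at the endpoint $p=2/\Ga$ the displayed bound is merely $O(1)$ and $\sum_k\|T_k\|_{\Sfrak_p}$ diverges; an essentially equivalent route, which makes the same $\Gve$-loss transparent, is to read off from the kernel bound the smoothing property $\Kcal^*\colon L^2(\p\GO)\to H^{\Ga-\Gve}(\p\GO)$ and then factor $\Kcal^*$ through the compact embedding $H^{\Ga-\Gve}(\p\GO)\hookrightarrow L^2(\p\GO)$, whose singular values decay like $j^{-(\Ga-\Gve)/2}$.
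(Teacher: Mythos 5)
Your argument is correct, and it reaches the key intermediate result $\Kcal^*\in\Scal^p(L^2(\p\GO))$ for all $p>2/\Ga$ by a genuinely different route than the paper. The paper does not work with $\Kcal^*$ directly: it forms the compositions $\Lcal_n=\Kcal^*(\Kcal\Kcal^*)^{(n-1)/2}$ (resp.\ $(\Kcal\Kcal^*)^{n/2}$) with $n$ the smallest integer exceeding $d/(2\Ga)$, proves via repeated convolution estimates and a H\"older-type difference estimate for the kernel that $L_n\in H^{0,\nu}(\p\GO\times\p\GO)$ for $\nu<(2n\Ga-d)/2$, invokes the Delgado--Ruzhansky kernel criterion to place $\Lcal_n$ in a Schatten class, and then uses $s_j(\Lcal_n)=s_j(\Kcal^*)^n$ to descend to $\Kcal^*$; your dyadic annular decomposition with the Schur test, Hilbert--Schmidt bounds, and the interpolation $\|T\|_{\Sfrak_p}\le\|T\|_{\Sfrak_2}^{2/p}\|T\|^{1-2/p}$ replaces all of that machinery by elementary estimates using only the size bound $|K(x,y)|\lesssim|x-y|^{-2+\Ga}$, and it sums precisely for $p>2/\Ga$, so the $\Gd$-loss appears in the same place. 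Your approach is more self-contained and also gives $p>d/\Ga$ in general dimension; what the paper's route buys is a framework (kernel Sobolev regularity plus the Schatten criterion) that carries over directly to the two-dimensional $C^{k,\Ga}$ refinement in Theorem 1.2, where higher tangential derivatives of the kernel are exploited. The final steps (Weyl's inequality, monotonicity giving the little-$o$, and letting $p\downarrow 2/\Ga$) coincide with the paper's Lemma on Schatten class and decay. One caveat: the identification of the nonzero $H^{-1/2}(\p\GO)$-spectrum with the $L^2(\p\GO)$-spectrum, which you dispatch by a bootstrap ``following from the kernel bound,'' does not follow from the size bound alone; the smoothing property $\Kcal\colon H^s\to H^{s+\Ga}$ (equivalently $\Kcal^*$ on the dual scale) needs in addition the difference estimate $|K(x,y)-K(x',y)|\lesssim|x-x'|\,|x-y|^{-(d+1-\Ga)}$, which is exactly what the paper proves and uses in its appendix. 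Since you already sketch the Taylor/H\"older argument that yields such estimates, this is a fixable omission rather than a flaw in the scheme, but it should be carried out rather than asserted.
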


In two dimensions, the main result is the following:

\begin{theo}
    \label{theo_eigenvalue_decay_2d_smoother}
    Let $\Omega$ be a bounded domain in $\Rbb^{2}$ with $C^{k, \alpha}$ boundary $\partial\Omega$ for some positive integer $k$ and $\alpha \in (0,1]$.  We have
    \begin{equation}\label{700}
        |\lambda_j (\Kcal^*)|=o(j^{-k+1 -\alpha+\delta}) \quad (j\to \infty)
    \end{equation}
    for all $\delta>0$.
\end{theo}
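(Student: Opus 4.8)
The plan is to reduce the two-dimensional eigenvalue estimate to a Schatten-norm bound on the NP operator, which in turn follows from mapping properties of $\Kcal^*$ between Sobolev spaces on $\partial\Omega$. First I would recall the general principle connecting decay rates of singular values to Schatten class membership: if a compact self-adjoint operator $T$ on a Hilbert space satisfies $T\in\Scal_p$ (the Schatten $p$-class) for some $p>0$, then its eigenvalues obey $|\lambda_j(T)|=o(j^{-1/p})$ as $j\to\infty$. Hence it suffices to show that $\Kcal^*$ belongs to $\Scal_p(H^{-1/2}(\partial\Omega))$ for every $p>1/(k+\alpha-1)$, equivalently that the singular values $s_j(\Kcal^*)$ are summable to any power exceeding $1/(k+\alpha-1)$; this yields $|\lambda_j|=o(j^{-(k+\alpha-1)+\delta})$ for all $\delta>0$, which is exactly \eqref{700}.

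Next I would establish the smoothing property of $\Kcal^*$. On a $C^{k,\alpha}$ curve in $\Rbb^2$, the kernel $(x-y)\cdot\nu_x/|x-y|^2$ is, after the standard parametrization by arclength, a pseudodifferential-type operator whose kernel is $C^{k-1,\alpha}$ across the diagonal (the leading singularity cancels because of the geometry, leaving a kernel that gains $k-1+\alpha$ derivatives of regularity relative to a Calderón–Zygmund operator of order $0$). Concretely, I would show $\Kcal^*$ maps $H^{s}(\partial\Omega)\to H^{s+k-1+\alpha-\epsilon}(\partial\Omega)$ for any $\epsilon>0$, or at least $H^{-1/2}\to H^{-1/2+k-1+\alpha-\epsilon}$, by examining the Fourier-side decay of the convolution kernel on the circle after transferring to the model case, together with a commutator/localization argument to handle the non-convolution error terms coming from the curvature of $\partial\Omega$. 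Earlier corollaries in the paper (e.g. Corollary \ref{coro_NP_compact_Sobolev}) presumably already give the qualitative compactness; here one needs the quantitative gain of $k-1+\alpha$ Sobolev derivatives.

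The final step is to convert the Sobolev smoothing into a Schatten bound. The embedding $H^{t}(\partial\Omega)\hookrightarrow H^{0}(\partial\Omega)=L^2(\partial\Omega)$ on a compact one-dimensional manifold is in $\Scal_p$ precisely when $tp>1$, since the singular values of this embedding decay like $j^{-t}$ (Weyl asymptotics for the Laplace–Beltrami operator on the circle). Writing $\Kcal^*$ as the composition of the bounded map $H^{-1/2}\to H^{-1/2+t}$ (with $t=k-1+\alpha-\epsilon$) followed by the inclusion $H^{-1/2+t}\hookrightarrow H^{-1/2}$, the ideal property of Schatten classes gives $\Kcal^*\in\Scal_p$ whenever $tp>1$, i.e. $p>1/(k-1+\alpha-\epsilon)$. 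Letting $\epsilon\to0$ and invoking the singular-value/eigenvalue comparison (Weyl's inequality, using self-adjointness so that $|\lambda_j|=s_j$) produces $|\lambda_j|=o(j^{-(k-1+\alpha)+\delta})$ for every $\delta>0$.

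The main obstacle I expect is the quantitative regularity gain of $\Kcal^*$ on a merely $C^{k,\alpha}$ boundary: one cannot appeal to smooth pseudodifferential calculus, so the mapping property $H^{s}\to H^{s+k-1+\alpha-\epsilon}$ must be proved by hand, controlling the Hölder-$C^{k-1,\alpha}$ regularity of the Schwartz kernel and the loss incurred in passing from Hölder regularity of the kernel to Sobolev mapping bounds (this is where the $\delta$/$\epsilon$ loss and the strict inequality in \eqref{700} enter). A secondary technical point is handling the non-translation-invariant part of the kernel and the low-regularity change of variables between the arclength parametrization and a fixed reference circle; a dyadic decomposition of the kernel near the diagonal, combined with Schur-test estimates on each piece, should suffice but requires care to track the exponent $k-1+\alpha$ sharply.
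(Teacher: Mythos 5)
Your overall architecture (Schatten membership plus the eigenvalue--singular value comparison of Lemma \ref{lemm_schatten_decay}) matches the paper's, but your route to the Schatten bound is genuinely different: you propose a quantitative smoothing estimate $\Kcal^*:H^{s}\to H^{s+k-1+\alpha-\epsilon}$ and then factor $\Kcal^*$ through the compact Sobolev embedding, whose singular values on a one-dimensional manifold decay like $j^{-t}$. The paper never proves such an operator mapping theorem; it only estimates the kernel in the second variable (Lemma \ref{lemm_tangential_derivative_ckb}), concludes $L_1\in H^{0,\nu}(\partial\Omega\times\partial\Omega)$ for every $\nu<k+\alpha-3/2$ (Lemma \ref{lemm_sobolev_condition_ckb}), and invokes the kernel criterion of Theorem \ref{theo_Delgado_Ruzhansky}, which for $d=1$ gives $\Kcal^*\in\Scal^p$ for all $p>1/(k+\alpha-1)$; the passage between the $H^{-1/2}$ and $L^2$ spectra is handled by Corollary \ref{coro_eigenfunction_smooth} rather than by working on $H^{-1/2}$ directly. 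Both routes target the same exponent, but yours shifts the entire difficulty into the mapping property, which the paper's kernel-based argument deliberately avoids.

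That mapping property is where your proposal has a real gap. First, your claim that the kernel is $C^{k-1,\alpha}$ across the diagonal is off by one order: for $k=1$ the kernel is only $O(|x-y|^{\alpha-1})$, hence unbounded, and in general the correct statement is the derivative bound $|\partial_{\tau_y}^{l}L_1(x,y)|\lesssim |x-y|^{k-2-l+\alpha}$ of Lemma \ref{lemm_tangential_derivative_ckb}; the convolution-model heuristic still suggests a gain of order $k-1+\alpha$, but the regularity you state cannot serve as the basis of the argument. Second, you need the gain precisely at the level $s=-1/2$, so that $\Kcal^*$ on $H^{-1/2}$ factors through $H^{-1/2+t}$, and on a merely $C^{k,\alpha}$ curve such estimates carry genuine range restrictions: even for $k=1$ the paper's Theorem \ref{theo_NP_bounded_Sobolev} obtains the gain $\alpha$ only for $0<s<(1-\alpha)/2$, and dualizing does not reach $s=-1/2$ for all $\alpha$. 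You would either have to prove the smoothing at that specific level, quantifying the low-regularity change of variables, the multiplication by the $C^{k-1,\alpha}$ normal, and the commutator errors you mention only in outline, or, more economically, prove $\Kcal^*\in\Scal^p(L^2)$ and transfer nonzero eigenvalues via Corollary \ref{coro_eigenfunction_smooth}, as the paper does. Note also that on $L^2$ the operator $\Kcal^*$ is not self-adjoint, so the identity $|\lambda_j|=s_j$ you invoke is available only for the $H^{-1/2}$ realization with the equivalent Khavinson--Putinar--Shapiro inner product; Weyl's inequality, as used in Lemma \ref{lemm_schatten_decay}, requires no self-adjointness and is the safer tool.
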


Theorem \ref{theo_eigenvalue_decay} and \ref{theo_eigenvalue_decay_2d_smoother} show that the critical decay exponent is \begin{equation}\label{1000}
q=
\begin{cases}
\alpha/2 \quad &\text{if } d=2, \\
k-1+\alpha \quad &\text{if } d=1
\end{cases}
\end{equation}
in the sense that $|\lambda_j (\Kcal^*)|=o(j^{-q+\delta})$ for any $\delta>0$. As Figure \ref{fg_pa_3d} shows, the line of critical exponent connects $0$ (when $\alpha =0$) and $1/2$ (when $\alpha=1$). The critical exponent $1/2$ is proved in \cite{Miyanishi22} as mentioned at the beginning of Introduction. Figure \ref{fg_pa_2d} compares the critical exponent of this paper and that obtained in \cite{Jung-Lim20, Miyanishi-Suzuki17}.

\begin{figure}[htb]
    \begin{tabular}[htb]{cc}
        \begin{minipage}{0.45\hsize}
            \begin{tikzpicture}[scale=0.6]
                \node (C3) at (4, 3) {};
                \node (C2) at (2, 1) {};
                \node (C5/2) at (5, 4) {};
                \node (C7/4) at (1.5, 0.5) {};
                \node (C7/4new) at (1.2, 1.2) {};
                \node (C5/2expected) at (4.5, 4.5) {};
                \node[font=\small] (MS) at (5, 2) {\cite{Miyanishi-Suzuki17}};
                \node[font=\small] (JL) at (6, 1) {\cite{Jung-Lim20}};
                \node[font=\small] (new) at (1.3, 3.5) {this paper};
                \node[font=\small] (origin) at (0, 0) [below left] {$O$};
                \node[font=\small] (alpha) at (7, 0) [below] {$k+\alpha-1$};
                \node[font=\small] (beta) at (0, 5) [left] {$q$};
                \node[font=\small] (x1) at (2, 0) [below] {$1$};
                \node[font=\small] (x1/2) at (1, 0) [below] {$1/2$};
                \node[font=\small] (y1/2) at (0, 1) [left] {$\displaystyle\frac{1}{2}$};
                \node[font=\small] (x2) at (4, 0) [below] {$2$};
                \node[font=\small] (y1) at (0, 2) [left] {$1$};
                \node[font=\small] (y3/2) at (0, 3) [left] {$\displaystyle\frac{3}{2}$};
                \node[font=\small] (y2) at (0, 4) [left] {$2$};
                \draw[dotted] (1, 0)--(6, 5);
                \draw[thick] (0, 0)--(5, 5);
                \draw[dashed] (0, 3)--(C3);
                \draw[dashed] (2, 0)--(2, 2)--(0, 2);
                \draw[dashed] (0, 1)--(2, 1);
                \draw[dashed] (4, 0)--(4, 4)--(0, 4);
                \draw[->] (0, 0)--(7, 0);
                \draw[->] (0, 0)--(0, 5);
                \filldraw[fill=white] (4, 3) circle [radius=0.08];
                \filldraw[fill=white] (0, 0) circle [radius=0.08];
                \filldraw[fill=white] (2, 1) circle [radius=0.08];
                \draw[->, >=stealth] (MS)--(C3);
                \draw[->, >=stealth] (MS)--(C2);
                \draw[->, >=stealth] (JL)--(C5/2);
                \draw[->, >=stealth] (JL)--(C7/4);
                \draw[->, >=stealth] (new)--(C7/4new);
            \end{tikzpicture}
            \caption{Critical decay exponent for $d=1$.}
            \label{fg_pa_2d}
        \end{minipage} &
        \begin{minipage}{0.45\hsize}
            \begin{tikzpicture}[scale=0.6]
                \node (C3) at (6, 2) {};
                \node (C2) at (4, 2) {};
                \node (C3/2) at (2, 1) {};
                \node[font=\small] (M) at (6, 4) {\cite{Miyanishi22}};
                \node[font=\small] (new) at (2, 3) {{\small this paper}};
                \node[font=\small] (origin) at (0, 0) [below left] {$O$};
                \node[font=\small] (alpha) at (7, 0) [below] {$k+\alpha-1$};
                \node[font=\small] (beta) at (0, 5) [left] {$q$};
                \node[font=\small] (x1) at (4, 0) [below] {$1$};
                \node[font=\small] (y1/2) at (0, 2) [left] {$\displaystyle\frac{1}{2}$};
                \draw[thick] (0, 0)--(4, 2)--(7, 2);
                \draw[dashed] (4, 0)--(C2)--(0, 2);
                \draw[->] (0, 0)--(7, 0);
                \draw[->] (0, 0)--(0, 5);
                \filldraw[fill=white] (0, 0) circle [radius=0.08];
                \fill (C2) circle [radius=0.08];
                \draw[->, >=stealth] (M)--(C3);
                \draw[->, >=stealth] (new)--(C3/2);
            \end{tikzpicture}
            \caption{Critical decay exponent for $d=2$.}
            \label{fg_pa_3d}
        \end{minipage}
    \end{tabular}
\end{figure}

Theorem \ref{theo_eigenvalue_decay} and \ref{theo_eigenvalue_decay_2d_smoother} are proved using a result of \cite{Delgado-Ruzhansky14} in a crucial way. The result is about a necessary condition of a compact integral operator on a Hilbert space to belong to a Schatten class. Using this result we show that a composition of the NP operators belongs to a certain Schatten class. We then use the Weyl's lemma, which asserts the $\ell^p$-norm of eigenvalues is less than that of singular values, to derive decay rates of eigenvalues. The necessary condition in \cite{Delgado-Ruzhansky14} is given in terms of the Sobolev norm of the integral kernel of the operator. It says that the higher differentiability of the integral kernel implies $p$-Schatten class for the operator for smaller $p$, and in turn implies a faster decay rate of eigenvalues. This explains the difference between decay rates in two and three dimensions as depicted in Figure \ref{fg_pa_2d} and \ref{fg_pa_3d}. If $\Omega \subset \Rbb^2$ and $\partial\Omega$ is $C^{k,\alpha}$, the integral kernel of the NP operator gains differentiability and eigenvalues decay indefinitely faster as $k$ or $\alpha$ increases as is proved in section \ref{theorem1.2} (if $\partial\Omega$ is real analytic, then the decay of eigenvalue is exponentially fast \cite{AKM18}). However, the integral kernel in three dimensions gains differentiability
only when $k=1$ and $\alpha$ increases; if $k \ge 2$, it does not and eigenvalues decay at the rate of $j^{-1/2}$ no matter how large $k$ is.

We do not know whether or not the estimates in Theorem \ref{theo_eigenvalue_decay} and \ref{theo_eigenvalue_decay_2d_smoother} are optimal. In this regard, it is interesting to investigate if the estimate $|\lambda_j|=O(j^{-q})$ holds, where $q$ is the critical exponent given in \eqref{1000}.

This paper is organized as follows. In section \ref{theorem1.1}, we give a proof of Theorem \ref{theo_eigenvalue_decay}. Since the proof works for the case $d=1$ as well, we use $d=1,2$ there. So it is an alternative proof for Theorem \ref{theo_eigenvalue_decay_2d_smoother} when $k=1$. The proof of Theorem \ref{theo_eigenvalue_decay_2d_smoother} given in section \ref{theorem1.2} uses heavily the differentiability properties of the kernel of the NP operator. Appendix is to prove regularity properties of the NP operator on Sobolev spaces. As a consequence of the regularity properties, compactness of the NP operator on $H^{-1/2}(\partial\Omega)$ follows. It is also proved that eigenfunctions in $H^{-1/2}(\partial\Omega)$ corresponding to nonzero eigenvalues belong to $L^2(\partial\Omega)$.

\section{Proof of Theorem \ref{theo_eigenvalue_decay}}\label{theorem1.1}

\subsection{Singularity estimates of the integral kernel}

For the proof of Theorem \ref{theo_eigenvalue_decay}, we begin with an estimate of an integral which appears as the integral kernel of the composition of operators. Here and afterwards, we use the conventional notation $A \lesssim B$ to imply that there is a positive constant $C$ such that $A \le CB$.

\begin{lemm}
    \label{lemm_convolution_sing}
    Let $\Omega\subset \Rbb^{d+1}$ be a bounded domain with the Lipschitz boundary and $\alpha, \beta\in (0, d]$ satisfy $\alpha+\beta\leq d$. Then the following estimates hold for all $x, y\in \partial\Omega$ with $x\neq y$:
    \begin{equation}\label{eq_convolution_sing}
        \int_{\partial\Omega} \frac{1}{|x-z|^{d-\alpha}}\frac{1}{|z-y|^{d-\beta}}\, \df \sigma (z)\lesssim
        \begin{cases}
            |x-y|^{-d+\alpha+\beta} & \text{if } \alpha+\beta<d, \\
            \left|\log |x-y|\right| & \text{if } \alpha+\beta=d.
        \end{cases}
    \end{equation}
\end{lemm}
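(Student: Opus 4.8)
The plan is to reduce everything to the $d$-dimensional upper Ahlfors regularity of a Lipschitz boundary: covering $\partial\Omega$ by finitely many coordinate cylinders in which it is the graph of a Lipschitz function and bounding surface measure on each graph yields a constant $C_\Omega$ with $\sigma\bigl(\partial\Omega\cap B(x,r)\bigr)\le C_\Omega\, r^{d}$ for all $x\in\partial\Omega$ and $r>0$ (for $r$ larger than the diameter this is just finiteness of $\sigma(\partial\Omega)$). From this one extracts the one auxiliary estimate that powers the whole argument: for $\gamma\in(0,d)$,
\begin{equation}\label{eq_aux_riesz}
    \int_{\partial\Omega\cap B(x,r)}\frac{\df\sigma(z)}{|x-z|^{d-\gamma}}\lesssim r^{\gamma},
\end{equation}
proved by slicing $B(x,r)$ into dyadic shells $\{\,2^{-j-1}r\le|x-z|<2^{-j}r\,\}$, on which the integrand is $\lesssim(2^{-j}r)^{-(d-\gamma)}$ and the surface measure is $\lesssim(2^{-j}r)^{d}$, and summing the geometric series $\sum_{j\ge0}(2^{-j}r)^{\gamma}$, which converges since $\gamma>0$. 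Note that $\alpha,\beta\in(0,d]$ together with $\alpha+\beta\le d$ force $\alpha,\beta\in(0,d)$ \emph{strictly}, so \eqref{eq_aux_riesz} applies both with $\gamma=\alpha$ and with $\gamma=\beta$.

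Now fix $x\neq y$ in $\partial\Omega$ and set $\rho=|x-y|$. I would split the domain of integration into three pieces: $\{|x-z|<\rho/2\}$, $\{|y-z|<\rho/2\}$, and the far set $V$ consisting of the rest. On $\{|x-z|<\rho/2\}$ the triangle inequality gives $|z-y|\ge\rho/2$, so the factor $|z-y|^{-(d-\beta)}$ is $\lesssim\rho^{-(d-\beta)}$ and \eqref{eq_aux_riesz} with $\gamma=\alpha$ bounds the contribution by $\rho^{-(d-\beta)}\cdot\rho^{\alpha}=\rho^{-d+\alpha+\beta}$; the piece $\{|y-z|<\rho/2\}$ is symmetric. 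This already yields the asserted bound in the case $\alpha+\beta<d$, and a term $\lesssim 1$ that is absorbed into $|\log\rho|$ when $\alpha+\beta=d$.

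For the far set I would use dyadic annuli around $x$: write $\{z:|x-z|\ge\rho/2\}=\bigcup_{k\ge0}W_k$ with $W_k=\{\,2^{k-1}\rho\le|x-z|<2^{k}\rho\,\}$, and note that $W_k\cap\partial\Omega\ne\emptyset$ forces $2^{k-1}\rho\le\operatorname{diam}\partial\Omega$, i.e.\ $k\le k_0:=1+\log_2(\operatorname{diam}\partial\Omega/\rho)$. For $k\ge2$ the triangle inequality gives $2^{k-2}\rho\le|z-y|<2^{k+1}\rho$ on $W_k$, so there the integrand is $\approx(2^{k}\rho)^{-(2d-\alpha-\beta)}$ while $\sigma(W_k)\lesssim(2^{k}\rho)^{d}$, whence the integral over $W_k$ is $\lesssim(2^{k}\rho)^{-(d-\alpha-\beta)}$; the innermost annuli $k=0,1$ are handled directly, using $|z-y|\ge\rho/2$ on $V$ and $|x-z|<2\rho$, with the same bound $\rho^{-(d-\alpha-\beta)}$. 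Summing,
\begin{equation}\label{eq_far_sum}
    \int_{V}\frac{\df\sigma(z)}{|x-z|^{d-\alpha}\,|z-y|^{d-\beta}}\lesssim\rho^{-(d-\alpha-\beta)}\sum_{k=0}^{k_0}2^{-k(d-\alpha-\beta)}.
\end{equation}
If $\alpha+\beta<d$ the exponent $d-\alpha-\beta$ is positive, the series is bounded by a constant, and the far set contributes $\lesssim\rho^{-d+\alpha+\beta}$; if $\alpha+\beta=d$ every term of \eqref{eq_far_sum} equals $1$ and there are $k_0+1\approx\log(\operatorname{diam}\partial\Omega/\rho)$ of them, giving $\lesssim|\log\rho|$. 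Adding the three pieces proves \eqref{eq_convolution_sing}.

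I do not expect a genuine obstacle — the estimate is standard once Ahlfors regularity is in hand. The only points requiring care are the bookkeeping for the far annuli, where one must verify that both $|x-z|$ and $|z-y|$ are comparable to the common scale $2^{k}\rho$, and the borderline case $\alpha+\beta=d$, where $|\log|x-y||$ is to be read in the regime $|x-y|\to0$ (or replaced by $1+|\log|x-y||$); throughout, one checks that the implied constant depends only on $\partial\Omega$ (through $C_\Omega$ and $\operatorname{diam}\partial\Omega$), $\alpha$, and $\beta$, uniformly in $x$ and $y$.
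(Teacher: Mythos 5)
Your proof is correct and takes essentially the same route as the paper: split the integral according to the relative sizes of $|x-z|$, $|z-y|$ and $|x-y|$, and control each piece using the upper bound $\sigma(\partial\Omega\cap B(x,r))\lesssim r^{d}$ for a Lipschitz boundary. The only cosmetic differences are that the paper's middle region $r/2\le|x-z|<2r$ plays the role of your ball around $y$, and your explicit dyadic-annulus bookkeeping (including the $\log$ count in the case $\alpha+\beta=d$) is a spelled-out version of the paper's one-line estimate of the far-region integral $I_3$.
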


\begin{proof}
    We set $r=|x-y|$ and decompose the integral in \eqref{eq_convolution_sing} as
    \begin{align*}
        \int_{\partial\Omega} \frac{1}{|x-z|^{d-\alpha}}\frac{1}{|z-y|^{d-\beta}}\, \df \sigma (z) &= \int_{|x-z|<r/2} + \int_{r/2 \leq |x-z| < 2r} + \int_{|x-z|\geq 2r} \\
        &=:I_1+I_2+I_3.
    \end{align*}
    If $|x-z|<r/2$, then $|z-y|\geq |x-y|-|x-z|>r/2$. So we have
    \begin{align*}
        I_1&\lesssim \frac{1}{r^{d-\beta}}\int_{|x-z|<r/2} \frac{\df \sigma (z)}{|x-z|^{d-\alpha}}
        \lesssim r^{-d+\beta}\int_0^{r} \frac{\df \rho}{\rho^{1-\alpha}} \lesssim |x-y|^{-d+\alpha+\beta}.
    \end{align*}
    If $r/2 \leq |x-z| < 2r$, then $|z-y| \le 3r$. So we have
    \[
        I_2 \lesssim \frac{1}{r^{d-\alpha}}\int_{|y-z|<3r} \frac{\df \sigma (z)}{|y-z|^{d-\beta}}
        \lesssim |x-y|^{-d+\alpha+\beta}.
    \]
If $|x-z|\geq 2r$, then $\frac{1}{2} |z-y| \le |x-z| \le 2|z-y|$. So we have
    \begin{align*}
        I_3 &\lesssim \int_{|x-z|\geq 2r} \frac{1}{|x-z|^{2d-\alpha-\beta}}\, \df \sigma (z)
        \lesssim
        \begin{cases}
            r^{-d+\alpha+\beta} & \text{if } \alpha+\beta<d, \\
            \left|\log r\right| & \text{if } \alpha+\beta=d.
        \end{cases}
    \end{align*}
This completes the proof.
\end{proof}

For a positive integer $n$, we define a linear operator $\Lcal_n$ by
\begin{equation}\label{300}
    \Lcal_n:=
    \begin{cases}
        \Kcal^*(\Kcal\Kcal^*)^{(n-1)/2} & \text{if } n \text{ is odd}, \\
        (\Kcal\Kcal^*)^{n/2} & \text{if } n \text{ is even}
    \end{cases}
\end{equation}
and denote the integral kernel of $\Lcal_n$ by $L_n(x, y)$:
\begin{equation}
    \label{eq_np_integral_kernel}
    \Lcal_n [f](x)=\int_{\partial\Omega} L_n (x, y)f(y)\, \df \sigma (y).
\end{equation}
Note that $\Lcal_1=\Kcal^*$ and thus $L_1(x, y)$ is the integral kernel of the NP operator $\Kcal^*$. We emphasize that if $\partial \Omega$ is $C^{1,\alpha}$, then
\begin{equation}\label{500}
|L_1(x, y)| \lesssim \frac{1}{|x-y|^{d-\alpha}}.
\end{equation}

We derive estimates for $L_n (x, y)-L_n (x, y^\prime)$, which will be used later. We begin with the case of $n=1$.

\begin{lemm}
    \label{lemm_L1_kernel_estimate}
        There exists a constant $C>0$ such that, if $2|y-y^\prime|<|x-y|$, then we have
        \begin{equation}\label{100}
            |L_1(x, y)-L_1(x, y^\prime)|\leq \frac{C|y-y^\prime|}{|x-y|^{d+1-\alpha}}.
        \end{equation}
\end{lemm}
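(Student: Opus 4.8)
The kernel $L_1(x,y)$ is, up to the constant $1/\omega_{d+1}$, the function $K(x,y) = \frac{(x-y)\cdot\nu_x}{|x-y|^{d+1}}$. The plan is to bound the difference $L_1(x,y)-L_1(x,y')$ by a mean-value-type argument in the $y$-variable: write $L_1(x,y)-L_1(x,y') = \int_0^1 \nabla_y L_1(x, y_t)\cdot(y-y')\,dt$ where $y_t = y' + t(y-y')$ — except that the path must be taken along $\partial\Omega$, since $L_1$ is only defined on the boundary. So more carefully, I would localize near $y$, flatten the boundary via a $C^{1,\alpha}$ chart, and estimate the tangential derivative of the extended kernel; alternatively, and more cleanly, I would directly estimate $\nabla_y K(x,y)$ as a function on $\mathbb{R}^{d+1}\times\mathbb{R}^{d+1}$ and then note that for $x,y,y'\in\partial\Omega$ with $2|y-y'|<|x-y|$ the straight segment from $y$ to $y'$ stays in the region $\{|x-z|\geq |x-y|/2\}$, so the Euclidean mean value estimate suffices provided one also controls the variation of $\nu_x$ — but here $\nu_x$ is fixed (only $y$ moves), so this is not an issue.

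First I would compute $\nabla_y K(x,y)$ explicitly:
\[
    \nabla_y \frac{(x-y)\cdot\nu_x}{|x-y|^{d+1}} = -\frac{\nu_x}{|x-y|^{d+1}} + (d+1)\frac{((x-y)\cdot\nu_x)(x-y)}{|x-y|^{d+3}},
\]
so that $|\nabla_y K(x,y)| \lesssim \frac{1}{|x-y|^{d+1}} + \frac{|(x-y)\cdot\nu_x|}{|x-y|^{d+2}}$. The crucial second term is controlled by the $C^{1,\alpha}$ regularity: since $\nu_x$ is the normal at $x$ and $\partial\Omega$ is $C^{1,\alpha}$, one has the standard estimate $|(x-y)\cdot\nu_x| \lesssim |x-y|^{1+\alpha}$ for $x,y\in\partial\Omega$ (this is exactly what underlies the bound \eqref{500}). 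Hence $|\nabla_y K(x,y)| \lesssim |x-y|^{-d-1} + |x-y|^{-d-1+\alpha} \lesssim |x-y|^{-d-1+\alpha}$ since $\alpha\le 1$ and $|x-y|$ is bounded. Then, parametrizing the geodesic (or a short $C^{1,\alpha}$ arc) $\gamma$ in $\partial\Omega$ from $y'$ to $y$ of length $\lesssim |y-y'|$ and observing that every point $z$ on $\gamma$ satisfies $|x-z|\ge|x-y|-|y-y'|-(\text{length of }\gamma) \gtrsim |x-y|$ when $2|y-y'|<|x-y|$, I integrate the gradient bound along $\gamma$ to get
\[
    |L_1(x,y)-L_1(x,y')| \lesssim \int_\gamma |\nabla_y K(x,z)|\,|d z| \lesssim \frac{|y-y'|}{|x-y|^{d+1-\alpha}},
\]
which is \eqref{100}.

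The main obstacle is making the "derivative of the kernel" estimate rigorous on the manifold $\partial\Omega$, which is only $C^{1,\alpha}$ and thus not smooth enough for a naive chain-rule / tangential-derivative argument; in particular the term $|(x-y)\cdot\nu_x|\lesssim|x-y|^{1+\alpha}$ must be proved from the Hölder regularity of the normal field, and one must check that the path $\gamma\subset\partial\Omega$ can be chosen with length comparable to $|y-y'|$ and staying in the far region $|x-z|\gtrsim|x-y|$. I expect these to be handled by the same local-chart computations already used (and presumably recorded in the appendix) to establish \eqref{500}; the difference estimate is then a routine refinement. An alternative that avoids differentiating on the boundary altogether is to split $K(x,y)-K(x,y') = \big(\text{numerator difference}\big)/|x-y|^{d+1} + (x-y')\cdot\nu_x\big(|x-y|^{-d-1}-|x-y'|^{-d-1}\big)$ and estimate each piece using $||x-y|-|x-y'||\le|y-y'|$, the comparability $|x-y|\sim|x-y'|$, and the two Hölder bounds $|(x-y)\cdot\nu_x|,|(x-y')\cdot\nu_x|\lesssim|x-y|^{1+\alpha}$; this reduces everything to elementary inequalities and is probably the cleanest route to write up.
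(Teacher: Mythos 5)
There is a genuine gap, and it occurs at the decisive step in both of your routes: you never use the H\"older continuity of the normal field, and without it the crucial gain of $|x-y|^{\alpha}$ cannot be obtained. In the mean-value route, your scalar bound $|\nabla_y K(x,y)|\lesssim |x-y|^{-d-1}+|x-y|^{-d-1+\alpha}\lesssim |x-y|^{-d-1+\alpha}$ is false: near the diagonal $|x-y|^{-d-1}$ \emph{dominates} $|x-y|^{-d-1+\alpha}$ (your justification ``since $|x-y|$ is bounded'' runs the inequality backwards), so integrating this bound along the segment only yields $|y-y'|\,|x-y|^{-d-1}$, which is weaker than \eqref{100} and insufficient for the later applications (Lemma \ref{lemm_Ln_kernel_estimate} and the Sobolev estimates). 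The term $-\nu_x/|x-z|^{d+1}$ in $\nabla_z K(x,z)$ is genuinely of size $|x-z|^{-d-1}$; the gain comes only after dotting it with the displacement $y-y'$ and using that $y-y'$ is nearly tangential at $x$, namely
\begin{equation}
    |\nu_x\cdot(y-y')|\le |(\nu_x-\nu_y)\cdot(y-y')|+|\nu_y\cdot(y-y')|\lesssim |x-y|^{\alpha}|y-y'|+|y-y'|^{1+\alpha}\lesssim |x-y|^{\alpha}|y-y'|,
\end{equation}
which requires the estimate $|\nu_x-\nu_y|\lesssim|x-y|^{\alpha}$ (H\"older continuity of the normal) together with the height bound at the base point $y$, not at $x$. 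This estimate appears nowhere in your argument.

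The same omission undermines your ``alternative'' split, which is in fact exactly the paper's decomposition into $A_1=|\nu_x\cdot(y-y')|/|x-y|^{d+1}$ and $A_2=|\nu_x\cdot(x-y')|\,\bigl||x-y|^{-d-1}-|x-y'|^{-d-1}\bigr|$: the ingredients you list ($||x-y|-|x-y'||\le|y-y'|$, the comparability $|x-y|\sim|x-y'|$, and the two bounds $|(x-y)\cdot\nu_x|,|(x-y')\cdot\nu_x|\lesssim|x-y|^{1+\alpha}$, all taken at the base point $x$) handle $A_2$ but not $A_1$. Applying them to the numerator difference $(y'-y)\cdot\nu_x$ gives only $\lesssim|x-y|^{1+\alpha}$, i.e.\ $A_1\lesssim|x-y|^{-d+\alpha}$ with no smallness in $|y-y'|$, which is just the static bound \eqref{500} and not the difference estimate \eqref{100}. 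Once you replace that step by the displayed estimate above (H\"older continuity of $\nu$ plus $|\nu_y\cdot(y-y')|\lesssim|y-y'|^{1+\alpha}$, and $|y-y'|^{1+\alpha}\lesssim|x-y|^{\alpha}|y-y'|$ from $2|y-y'|<|x-y|$), your second route coincides with the paper's proof, and your mean-value route can also be repaired by keeping the directional structure $\nabla_zK(x,z_t)\cdot(y-y')$ along the straight segment, on which $(x-z_t)\cdot\nu_x$ is affine in $t$ and hence still $\lesssim|x-y|^{1+\alpha}$.
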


\begin{proof}
    For simplicity of notation, we set $t=|y-y^\prime|$ and $s=|x-y|$.
    We have
    \begin{align*}
        &|L_1(x, y)-L_1(x, y^\prime)| \\
        &\lesssim \frac{|\nv_x \cdot (y-y^\prime)|}{|x-y|^{d+1}}
        +\left|\frac{1}{|x-y|^{d+1}}-\frac{1}{|x-y^\prime|^{d+1}}\right| |\nv_x \cdot (x-y^\prime)|=:A_1+A_2.
    \end{align*}

    Since $\p\GO$ is $C^{1,\alpha}$, we have
    \begin{align*}
        |\nv_x \cdot (y-y^\prime)|\leq |(\nv_x-\nv_y)\cdot (y-y^\prime)|+|\nv_y \cdot (y-y^\prime)|\lesssim s^\alpha t+t^{1+\alpha}
        \lesssim s^\alpha t,
    \end{align*}
    from which we infer that
    \[
        A_1 \lesssim \frac{t}{s^{d+1-\alpha}}.
    \]

    Since $2 t<s$, we have
    \[
    |x-y^\prime|\geq |x-y|-|y-y^\prime|\geq s/2
    \]
    and
    \[
    |x-y^\prime|\leq |x-y|+|y-y^\prime|\leq 3s/2.
    \]
    It then follows that
    \begin{align*}
        \left|\frac{1}{|x-y|^{d+1}}-\frac{1}{|x-y^\prime|^{d+1}}\right|
        &=\frac{||x-y^\prime|^{d+1}-|x-y|^{d+1}|}{|x-y^\prime|^{d+1}|x-y|^{d+1}} \\
        &\lesssim \frac{||x-y|-|x-y^\prime||}{s^{2d+2}}\sum_{j=1}^{d+1} |x-y^\prime|^{j-1}|x-y|^{d+1-j} \\
        &\lesssim \frac{|y-y^\prime|}{s^{2d+2}}\times s^d
        = \frac{t}{s^{d+2}}.
    \end{align*}
    Moreover, we have
    \begin{align*}
        |\nv_x \cdot (x-y^\prime)|\leq |x-y^\prime|^{1+\alpha}\lesssim s^{1+\alpha},
    \end{align*}
    which together with the above estimate leads us to
    \[
        A_2 \lesssim \frac{t}{s^{d+1-\alpha}}.
    \]
    So, \eqref{100} follows.
\end{proof}

We now deal with the case when $n \ge 2$.
\begin{lemm}
    \label{lemm_Ln_kernel_estimate}
    Assume $\alpha\leq d/2$ and let $n$ be an integer such that $n\leq d/\alpha$.
    Then there exists a constant $C>0$ such that, if $4|y-y^\prime|< |x-y|$, then we have
    \begin{equation}\label{200}
        |L_n(x, y)-L_n(x, y^\prime)|\leq \frac{C|y-y^\prime|^\alpha}{|x-y|^{d-(n-1)\alpha}}.
    \end{equation}
\end{lemm}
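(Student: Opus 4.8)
The plan is to induct on $n$, using the multiplicative structure $\Lcal_{n}=\Kcal^{*}\Lcal_{n-1}$ (for the purposes of the kernel estimate the distinction between $\Kcal$ and $\Kcal^{*}$ is irrelevant, since $|L_{1}(x,y)|\lesssim|x-y|^{-d+\alpha}$ in either case), which translates into the kernel identity
\[
    L_{n}(x,y)=\int_{\partial\Omega} L_{1}(x,z)\,L_{n-1}(z,y)\,\df\sigma(z).
\]
The base case $n=1$ is precisely Lemma \ref{lemm_L1_kernel_estimate}. For the inductive step I would fix $x,y,y'$ with $4|y-y'|<|x-y|$, write $r=|x-y|$ and $t=|y-y'|$, and split
\[
    L_{n}(x,y)-L_{n}(x,y')=\int_{\partial\Omega} L_{1}(x,z)\bigl(L_{n-1}(z,y)-L_{n-1}(z,y')\bigr)\df\sigma(z),
\]
then break the $z$-integral into the near region $\{|z-y|<2t\}$ and the far region $\{|z-y|\ge 2t\}$. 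In the far region one has $2|y-y'|<|z-y|$, so the inductive hypothesis applies to $L_{n-1}(z,y)-L_{n-1}(z,y')$ and gives a bound $t^{\alpha}|z-y|^{-d+(n-2)\alpha}$; combined with $|L_{1}(x,z)|\lesssim|x-z|^{-d+\alpha}$, the far contribution is handled by Lemma \ref{lemm_convolution_sing} with exponents $\alpha$ and $(n-1)\alpha$ (note $\alpha+(n-1)\alpha=n\alpha\le d$ by hypothesis), yielding $t^{\alpha}\cdot|x-y|^{-d+\alpha+(n-1)\alpha}=t^{\alpha}|x-y|^{-d+n\alpha}$, which is even better than required (it is $\le t^{\alpha}|x-y|^{-d+(n-1)\alpha}$ since $r\le\operatorname{diam}\partial\Omega$). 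One subtlety in the far region: near its boundary $|z-y|\approx 2t$ the radial integral $\int_{\ge 2t}$ starts at $2t$ rather than $0$, but since we only need an \emph{upper} bound this is harmless; if one wants to be careful the convolution lemma is stated for a full integral over $\partial\Omega$ and the integrand is nonnegative, so restricting the domain only decreases it.

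For the near region $\{|z-y|<2t\}$ I would \emph{not} use the difference estimate but rather bound $L_{n-1}(z,y)$ and $L_{n-1}(z,y')$ separately. One needs the pointwise bound $|L_{n}(x,y)|\lesssim|x-y|^{-d+n\alpha}$ (with a logarithm or boundedness in the borderline case $n\alpha=d$), which follows from \eqref{500} and Lemma \ref{lemm_convolution_sing} by an easy auxiliary induction on $n$ — this should be recorded as a preliminary step. On $\{|z-y|<2t\}$ one also has $|z-y'|\le|z-y|+t<3t$, and since $4t<r$, both $z$ and the whole near-ball lie within distance $\approx r$ of $x$, so $|x-z|\ge r-2t\ge r/2$, giving $|L_{1}(x,z)|\lesssim r^{-d+\alpha}$. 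Hence the near contribution is
\[
    \lesssim r^{-d+\alpha}\int_{|z-y|<2t}\Bigl(|z-y|^{-d+(n-1)\alpha}+|z-y'|^{-d+(n-1)\alpha}\Bigr)\df\sigma(z)
    \lesssim r^{-d+\alpha}\cdot t^{(n-1)\alpha},
\]
where the last integral is evaluated in polar coordinates on the (Lipschitz) surface as $\int_{0}^{\lesssim t}\rho^{(n-1)\alpha-1}\,\df\rho\lesssim t^{(n-1)\alpha}$, using $(n-1)\alpha>0$; in the borderline case $(n-1)\alpha=d$ one should instead invoke the pointwise bound in its logarithmic form, but that case is excluded once $n\le d/\alpha$ forces $(n-1)\alpha\le d-\alpha<d$. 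Then $r^{-d+\alpha}t^{(n-1)\alpha}=t^{\alpha}\cdot r^{-d+\alpha}t^{(n-2)\alpha}\le t^{\alpha}r^{-d+\alpha}r^{(n-2)\alpha}=t^{\alpha}r^{-d+(n-1)\alpha}$, using $t<r$, which is exactly the claimed bound \eqref{200}.

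The main obstacle is bookkeeping rather than conceptual: one must keep track of which exponent combinations remain $\le d$ so that Lemma \ref{lemm_convolution_sing} applies in its power form and never hits the logarithmic case, and one must handle the near-region integral without the difference structure. The hypotheses $\alpha\le d/2$ and $n\le d/\alpha$ are exactly what make this work — the first ensures $2\alpha\le d$ so the first composition already lands in the power regime, and the second ensures $n\alpha\le d$ throughout the induction. A minor point to get right is that the pointwise bound $|L_{n}(x,y)|\lesssim|x-y|^{-d+n\alpha}$ used in the near region needs $n\alpha<d$ strictly to avoid logs there; since the difference estimate \eqref{200} is only claimed for $n\le d/\alpha$ and the near region already contributes $t^{(n-1)\alpha}$ with $(n-1)\alpha<d$, this is automatic, but it is worth stating the pointwise lemma with the three cases and citing the relevant one.
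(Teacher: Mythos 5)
Your overall strategy---peel one factor off the composition, split the $z$-integral into a near and a far region, and combine a pointwise bound on one factor with a difference bound on the other---is the same as the paper's; the difference is that you peel off the \emph{first} factor and induct, applying the lemma itself to $L_{n-1}(z,\cdot)$, whereas the paper peels off the \emph{last} factor and only ever applies the $n=1$ difference estimate of Lemma \ref{lemm_L1_kernel_estimate}, together with a pointwise bound on the kernel of the remaining composition. This is where your argument has a genuine gap: in your far region $\{|z-y|\ge 2t\}$ you assert that the inductive hypothesis applies because $2|y-y^\prime|<|z-y|$, but the inductive hypothesis (the lemma for $n-1\ge 2$) requires $4|y-y^\prime|<|z-y|$; only the base case Lemma \ref{lemm_L1_kernel_estimate} comes with the factor $2$. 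So on the annulus $2t\le |z-y|\le 4t$ you have no estimate at all, and this annulus cannot simply be absorbed into your near region: your near-region argument rests on $|x-z|\ge r/2$, which follows from $|z-y|<2t$ and $4t<r$, but on the annulus one only gets $|x-z|\ge r-4t$, which can be arbitrarily small compared with $r$ when $t$ is close to $r/4$, so $L_1(x,\cdot)$ may be singular there. The gap is fixable---for instance, estimate the annulus contribution by pointwise bounds, using $|z-y|\sim t$, $|z-y^\prime|\ge t$ and $\int_{|z-y|\le 4t}|x-z|^{-d+\alpha}\,\df\sigma(z)\lesssim \min\{t^{d}r^{-d+\alpha},\,t^{\alpha}\}$, and treat the cases $8t\le r$ and $8t>r$ (where $t\sim r$) separately---but as written the inductive step does not close. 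Note that the paper's arrangement sidesteps the issue entirely: because the difference is always placed on a single $L_1$ factor, only the constant-$2$ statement of Lemma \ref{lemm_L1_kernel_estimate} is invoked inside the integral, which is compatible with the split at $|z-y|=2t$ under the hypothesis $4|y-y^\prime|<|x-y|$, and no induction on the difference estimate is needed.

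A secondary inaccuracy: in your far region, when $n\alpha=d$ Lemma \ref{lemm_convolution_sing} produces $t^{\alpha}\left|\log|x-y|\right|$ rather than $t^{\alpha}|x-y|^{-d+n\alpha}$, so the logarithmic case \emph{can} occur; it is harmless, since $\left|\log r\right|\lesssim r^{-\alpha}$ on a bounded set gives the required bound $t^{\alpha}|x-y|^{-d+(n-1)\alpha}$, but your claim that the power regime is never left should be corrected accordingly. The remaining ingredients you use (the pointwise bound $|L_{n}(x,y)|\lesssim|x-y|^{-d+n\alpha}$ with $(n-1)\alpha<d$, the observation that restricting the convolution integral only decreases it, and the conversion of the base-case bound $t\,r^{-d-1+\alpha}$ into $t^{\alpha}r^{-d}$) are correct and match the paper's.
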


\begin{proof}
    Since the case $n=1$ was already proved in Lemma \ref{lemm_L1_kernel_estimate}, we assume $n \ge 2$. We denote by $L^\prime_n(x, y)$ the integral kernel of $(\Kcal^*\Kcal)^{(n-1)/2}$ if $n$ is odd and that of $(\Kcal\Kcal^*)^{n/2-1}\Kcal$ if $n$ is even. Note that the following relation holds: 
    \begin{equation}\label{2000}
        L_n(x, y)-L_n(x, y^\prime) 
        = \int_{\partial\Omega} L^\prime_n(x, z)
        (L_1(z, y)-L_1(z, y^\prime))\, \df \sigma (z).
    \end{equation}

    Since $(n-1)\alpha<d$, we may apply Lemma \ref{lemm_convolution_sing} repeatedly to infer that
    $$
    |L^\prime_n(x, z)|\lesssim \frac{1}{|x-z|^{d-(n-1)\alpha}}
    $$
    for all $x, z\in \partial\Omega$ with $x\neq z$. It then follows from \eqref{2000} that 
    \begin{equation}
        \label{eq_Ln_Lpn}
        |L_n(x, y)-L_n(x, y^\prime)|\lesssim \int_{\partial\Omega} \frac{|L_1(z, y)-L_1(z, y^\prime)|}{|x-z|^{d-(n-1)\alpha}}\, \df \sigma (z).
    \end{equation}

    For simplicity of notation, we put $t=|y-y^\prime|$ and $s=|x-y|$. We decompose the integral in \eqref{eq_Ln_Lpn} into two parts:
    \[
        \int_{\partial\Omega} \frac{|L_1(z, y)-L_1(z, y^\prime)|}{|x-z|^{d-(n-1)\alpha}}\, \df \sigma (z)= \int_{|z-y|\leq 2t} + \int_{|z-y|> 2t}=:I+J.
    \]

    If $|z-y|\leq 2t$, then $|x-z|> |x-y| - |y-z| >s/2$ since $4t< s$, and $I$ can be estimated as follows:
    \begin{align*}
        I &\lesssim \int_{|z-y|\leq 2t} \frac{|L_1(z, y)|+|L_1(z, y^\prime)|}{|x-z|^{d-(n-1)\alpha}}\, \df \sigma (z) \\
        &\lesssim \int_{|z-y|\leq 2t} \frac{1}{|x-z|^{d-(n-1)\alpha}}\left(\frac{1}{|z-y|^{d-\alpha}}+\frac{1}{|z-y^\prime|^{d-\alpha}}\right)\, \df \sigma (z) \\
        &\lesssim \frac{1}{s^{d-(n-1)\alpha}}\left(\int_{|z-y|\leq 2t} \frac{1}{|z-y|^{d-\alpha}}\, \df \sigma (z)+\int_{|z-y^\prime|\leq 3t}\frac{1}{|z-y^\prime|^{d-\alpha}}\, \df \sigma (z)\right) \\
        &\lesssim \frac{t^\alpha}{s^{d-(n-1)\alpha}}.
    \end{align*}

    By Lemma \ref{lemm_L1_kernel_estimate}, we have
    \[
        J\lesssim t\int_{|z-y|>2t} \frac{\df \sigma (z)}{|z-y|^{d+1-\alpha}|x-z|^{d-(n-1)\alpha}}.
    \]
    We decompose the integral in the right hand side into three parts:
    \begin{align*}
        &\int_{|z-y|>2t} \frac{\df \sigma (z)}{|z-y|^{d+1-\alpha}|x-z|^{d-(n-1)\alpha}} \\
        & = \int_{2t<|z-y|\leq s/2} + \int_{s/2<|z-y|\leq 2s} + \int_{2s<|z-y|} =: J_1 + J_2 + J_3.
    \end{align*}

    If $|z-y|\leq s/2$, then $|x-z|\geq |x-y|-|y-z|\geq s/2$, and hence the integral $J_1$ is estimated as
    \begin{align*}
        J_1\lesssim \frac{1}{s^{d-(n-1)\alpha}}\int_{2t<|z-y|\leq s/2} \frac{\df \sigma (z)}{|z-y|^{d+1-\alpha}}
        \lesssim \frac{1}{s^{d-(n-1)\alpha}t^{1-\alpha}}.
    \end{align*}
    If $|z-y|\leq 2s$, then $|x-z|\leq |x-y|+|y-z|\leq 3s$, and hence $J_2$ is estimated as
    \begin{align*}
        J_2\lesssim \frac{1}{s^{d+1-\alpha}}\int_{|x-z|\leq 3s} \frac{\df \sigma (z)}{|x-z|^{d-(n-1)\alpha}}
        \lesssim \frac{1}{s^{d+1-n\alpha}}\lesssim \frac{1}{s^{d-(n-1)\alpha}t^{1-\alpha}},
    \end{align*}
    where the last inequality holds since $4t \leq s$. If $|z-y|>2s$, then $|x-z|\geq |z-y|-|x-y|>|z-y|/2$, and hence $J_3$ is estimated as
    \begin{align*}
        J_3\lesssim \int_{2s<|z-y|}\frac{\df \sigma (z)}{|z-y|^{2d+1-n\alpha}} \lesssim \frac{1}{s^{d+1-n\alpha}}\lesssim \frac{1}{s^{d-(n-1)\alpha}t^{1-\alpha}}.
    \end{align*}
    Thus we obtain
    \[
        J\leq \frac{Ct}{s^{d-(n-1)\alpha} t^{1-\alpha}}\leq \frac{Ct^\alpha}{s^{d-(n-1)\alpha}},
    \]
    and the proof is complete.
\end{proof}

\subsection{Estimates of the kernel in Sobolev spaces}

Let $H^{\mu, \nu} (\partial\Omega \times \partial\Omega)$ be the Sobolev space on $\partial\Omega\times \partial\Omega$ of order $\mu\geq 0$ with respect to the first variable and order $\nu\geq 0$ with respect to the second variable (for more detail, see \cite{Delgado-Ruzhansky14}). We only employ the case $\mu=0$ in this paper. In this case, the linear mapping
\[
    A\in H^{0, \nu}(\partial\Omega \times \partial\Omega) \longmapsto (x \mapsto A(x, \cdot))\in L^2(\partial\Omega, H^\nu (\partial\Omega))
\]
gives a natural isomorphism $H^{0, \nu}(\partial\Omega \times \partial\Omega)\simeq L^2 (\partial\Omega, H^\nu (\partial\Omega))$. Here $H^\nu (\partial\Omega)$ is the Sobolev space on $\partial\Omega$ of order $\nu$. It is known that, if $k\geq 0$ is an integer and $0<\alpha <1$, then the Sobolev norm on $H^{k+\alpha}(\partial\Omega)$ is equivalent to the norm
\begin{equation}
    \label{eq_sobolev}
    \left( \sum_{j=0}^k\| \partial_{\tau}^j f\|_{L^2(\partial\Omega)}^2+\int_{\partial\Omega \times \partial\Omega}\frac{|\partial_{\tau}^k f(x)-\partial_{\tau}^k f(y)|^2}{|x-y|^{2\alpha+d}}\, \df \sigma (x)\df \sigma (y)\right)^{1/2}
\end{equation}
where $\partial_\tau$ is the derivative in the tangential direction (see \cite{Gilbarg-Trudinger01}). In this paper, we denote the norm \eqref{eq_sobolev} by $\|f\|_{H^{k+\alpha}}$. Thus, the norm
\begin{equation}
    \label{eq_sobolev_double}
    \begin{aligned}
        &\left(\int_{\partial\Omega} \| A(x, \cdot)\|_{H^{k+\alpha}}^2 \, \df \sigma (x)\right)^{1/2} \\
    &=\biggl( \sum_{j=0}^k \| \partial_{\tau_y}^j A\|_{L^2(\partial\Omega \times \partial\Omega)}^2 \\
    &\quad +\int_{(\partial\Omega)^3} \frac{|\partial_{\tau_y}^k A(x, y)-\partial_{\tau_y}^k A(x, y^\prime)|^2}{|y-y^\prime |^{2\alpha+d}} \, \df \sigma (x) \df \sigma (y) \df \sigma (y^\prime)\biggr)^{1/2}
    \end{aligned}
\end{equation}
is equivalent to the norm on $H^{0, k+\alpha} (\partial\Omega \times \partial \Omega)$.

\begin{lemm}\label{lemm_kernel_sobolev}
    Let $0<\alpha\le 1$ and $\partial\Omega$ is $C^{1, \alpha}$. Assume  and let $n$ be the smallest integer larger than $d/(2\alpha)$. Then $L_n \in H^{0, \nu}(\partial\Omega \times \partial\Omega)$ for all $\nu \in (0, (2n\alpha-d)/2)$, where $L_n$ is the integral kernel of the operator $\Lcal_n$ defined in \eqref{300}.
\end{lemm}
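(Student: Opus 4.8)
The plan is to work with the characterization \eqref{eq_sobolev_double} of the $H^{0,\nu}$-norm in the case $k=0$, which is the relevant one because every admissible $\nu$ satisfies $\nu < (2n\alpha-d)/2 = n\alpha - d/2 \le \alpha \le 1$; here the inequality $n\alpha - d/2 \le \alpha$ is just $(n-1)\alpha \le d/2$, which holds by the minimality of $n$. Thus it suffices to show two things: that $L_n \in L^2(\partial\Omega\times\partial\Omega)$, and that the Gagliardo-type integral $\int_{(\partial\Omega)^3}|y-y'|^{-2\nu-d}|L_n(x,y)-L_n(x,y')|^2\,\df\sigma(x)\,\df\sigma(y)\,\df\sigma(y')$ is finite for $\nu\in(0,(2n\alpha-d)/2)$. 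I will also use the surface-measure estimates $\int_{|x-y|>\rho}|x-y|^{-\gamma}\,\df\sigma(x)\lesssim\rho^{d-\gamma}$ for $\gamma>d$ and $\int_{|x-y|<\rho}|x-y|^{-\gamma}\,\df\sigma(x)\lesssim\rho^{d-\gamma}$ for $\gamma<d$, valid by Ahlfors regularity of the Lipschitz surface $\partial\Omega$ (these are already implicit in the proof of Lemma \ref{lemm_convolution_sing}).

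For the $L^2$ bound I would first record a pointwise estimate on $L_n$. Writing $\Lcal_n=\Lcal'_n\Kcal^*$ with $\Lcal'_n$ as in the proof of Lemma \ref{lemm_Ln_kernel_estimate}, so that $L_n(x,y)=\int_{\partial\Omega}L'_n(x,z)L_1(z,y)\,\df\sigma(z)$, and applying Lemma \ref{lemm_convolution_sing} repeatedly (permissible since $(n-1)\alpha\le d/2<d$), one gets $|L'_n(x,z)|\lesssim|x-z|^{-(d-(n-1)\alpha)}$; combining this with \eqref{500} and one further application of Lemma \ref{lemm_convolution_sing} with exponents $(n-1)\alpha$ and $\alpha$, whose sum $n\alpha$ satisfies $n\alpha\le d$ (a short case check for $d=1,2$), yields $|L_n(x,y)|\lesssim|x-y|^{-(d-n\alpha)}$, or $\lesssim\bigl|\log|x-y|\bigr|$ in the borderline case $n\alpha=d$. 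Since $n>d/(2\alpha)$, the exponent $2(d-n\alpha)$ is strictly less than $d$, so this kernel lies in $L^2(\partial\Omega\times\partial\Omega)$ (the logarithm is square-integrable as well); this also shows $L_n$ is an honest function rather than a principal value.

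For the Gagliardo integral I would split the $x$-integration according to whether $4|y-y'|<|x-y|$ or $4|y-y'|\ge|x-y|$. On the first region, $|L_n(x,y)-L_n(x,y')|\lesssim|y-y'|^\alpha|x-y|^{-(d-(n-1)\alpha)}$: this is Lemma \ref{lemm_Ln_kernel_estimate} when $n\ge2$ (note $n\ge2$ forces $\alpha\le d/2$ and $n\le d/\alpha$, so its hypotheses hold automatically), and it follows from Lemma \ref{lemm_L1_kernel_estimate} together with $|y-y'|\le|x-y|$ when $n=1$. Since $2(d-(n-1)\alpha)\ge d$, integrating $|x-y|^{-2(d-(n-1)\alpha)}$ over $\{x\in\partial\Omega:|x-y|>4|y-y'|\}$ contributes a factor $\lesssim|y-y'|^{2(n-1)\alpha-d}$ (with a harmless logarithm when $(n-1)\alpha=d/2$). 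On the second region, where $|x-y|\le4|y-y'|$ and hence also $|x-y'|\le5|y-y'|$, I use instead $|L_n(x,y)-L_n(x,y')|^2\lesssim|L_n(x,y)|^2+|L_n(x,y')|^2$ with the pointwise bound above, and integrate over the surface ball $\{x:|x-y|\le4|y-y'|\}$ (using $2(d-n\alpha)<d$), which contributes $\lesssim|y-y'|^{2n\alpha-d}$. In both regions, after the $x$-integration the remaining integrand in $(y,y')$ is (a constant times) $|y-y'|^{2n\alpha-2d-2\nu}$, and $\int_{(\partial\Omega)^2}|y-y'|^{2n\alpha-2d-2\nu}\,\df\sigma(y)\,\df\sigma(y')<\infty$ precisely when $2n\alpha-2d-2\nu>-d$, i.e. $\nu<(2n\alpha-d)/2$, which is our hypothesis.

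The routine part is all the surface-integral bookkeeping. The step requiring the most care is handling the two borderline situations $n\alpha=d$ and $(n-1)\alpha=d/2$, where the relevant power laws degenerate to logarithms; these are absorbed by replacing $\bigl|\log|y-y'|\bigr|$ with $|y-y'|^{-\varepsilon}$ for arbitrarily small $\varepsilon>0$, which is harmless precisely because the inequality $\nu<(2n\alpha-d)/2$ is strict. One should also double-check at the outset that the hypotheses of Lemmas \ref{lemm_L1_kernel_estimate} and \ref{lemm_Ln_kernel_estimate} are met, which, as noted, is automatic once $n$ is taken to be the smallest integer exceeding $d/(2\alpha)$.
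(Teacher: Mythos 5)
Your proposal is correct and follows essentially the same route as the paper: the same pointwise bounds on $L_n$ obtained by iterating Lemma \ref{lemm_convolution_sing}, the difference estimates of Lemmas \ref{lemm_L1_kernel_estimate} and \ref{lemm_Ln_kernel_estimate}, and the same splitting $4|y-y^\prime| \lessgtr |x-y|$ of the Gagliardo integral. The only differences are bookkeeping: the paper establishes the stronger uniform bound $\sup_{x}\|L_n(x,\cdot)\|_{H^\nu}<\infty$ (integrating in $y^\prime$, then $y$, for fixed $x$) and treats $\alpha>d/2$ (where $d=1$, $n=1$) as a separate case, whereas you integrate in $x$ first and absorb $n=1$ into the general scheme, at the cost of the extra borderline $(n-1)\alpha=d/2$, which your $\varepsilon$-loss argument handles correctly.
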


\begin{proof}
    Assume $0<\nu < (2n\alpha-d)/2$. We shall show
    \begin{equation}
        \label{eq_infty_sobolev}
        \sup_{x\in \partial\Omega} \| L_n (x, \cdot)\|_{H^\nu}<\infty,
    \end{equation}
    which immediately implies $L_n\in H^{0, \nu}(\partial\Omega \times \partial\Omega)$ by the representation \eqref{eq_sobolev_double} of the Sobolev norm on $H^{0, \nu}(\partial\Omega \times \partial\Omega)$.

    We first deal with the case when $\alpha\leq d/2$. Since the integral kernels of both $\Kcal$ and $\Kcal^*$ are $O(|x-y|^{-d+\alpha})$ as $|x-y|\to 0$ and $n\alpha\leq d/2+\alpha\leq d$, one can show by applying Lemma \ref{lemm_convolution_sing} repeatedly that the following estimates hold:
    \begin{equation}\label{400}
        |L_n (x, y)|=
        \begin{cases}
            O(|x-y|^{-d+n\alpha}) & \text{if } n\alpha<d, \\
            O(|\log |x-y||) & \text{if } n\alpha=d.
        \end{cases}
    \end{equation}
    Since $n\alpha \ge d/2$, we have
    $$
    \sup_{x\in \partial\Omega}\| L_n(x, \cdot)\|_{L^2(\partial\Omega)}<\infty
    $$
    in both cases.

    We now show that
    \begin{equation}
        \label{eq_kernel_sobolev_seminorm}
        \sup_{x\in \partial\Omega}\int_{\partial\Omega \times \partial\Omega} \frac{|L_n (x, y)-L_n (x, y^\prime)|^2}{|y-y^\prime |^{2\nu+d}} \, \df \sigma (y) \df \sigma (y^\prime)<\infty.
    \end{equation}
    We decompose $\partial\Omega \times \partial\Omega$ into two regions $4|y-y^\prime|< |x-y|$ and $4|y-y^\prime| \ge |x-y|$ and make estimates there separately.

    In the region $4|y-y^\prime|< |x-y|$, we apply Lemma \ref{lemm_Ln_kernel_estimate} to obtain
    \begin{align*}
        I_1 & :=\int_{4|y-y^\prime|< |x-y|} \frac{|L_n (x, y)-L_n (x, y^\prime)|^2}{|y-y^\prime |^{2\nu+d}} \, \df \sigma (y) \df \sigma (y^\prime) \\
        &\lesssim \int_{\partial\Omega} \frac{\df \sigma (y)}{|x-y|^{2(d-(n-1)\alpha)}}\int_{4|y-y^\prime|<|x-y|} \frac{\df \sigma (y^\prime)}{|y-y^\prime|^{2\nu+d-2\alpha}} .
    \end{align*}
    Since $2\nu+d-2\alpha < 2(n-1) \alpha \le d$, we have
    \begin{align*}
    \int_{4|y-y^\prime|<|x-y|} \frac{\df \sigma (y^\prime)}{|y-y^\prime|^{2\nu+d-2\alpha}} \lesssim \frac{1}{|x-y|^{2\nu-2\alpha}},
    \end{align*}
    and hence
    \begin{align*}
        I_1 \lesssim \int_{\partial\Omega} \frac{\df \sigma (y)}{|x-y|^{2(d-n\alpha)+2\nu}}\lesssim 1,
    \end{align*}
    where the second inequality holds since $\nu<(2n\alpha-d)/2$.

    In the region $|x-y|\leq 4|y-y^\prime|$, we shall estimate
    \[
        I_{2,1}:=\int_{|x-y|\leq 4|y-y^\prime|} \frac{|L_n (x, y)|^2}{|y-y^\prime |^{2\nu+d}} \, \df \sigma (y) \df \sigma (y^\prime)
    \]
    and
    \[
        I_{2.2}:=\int_{|x-y|\leq 4|y-y^\prime|} \frac{|L_n (x, y^\prime)|^2}{|y-y^\prime |^{2\nu+d}} \, \df \sigma (y) \df \sigma (y^\prime)
    \]
    separately. We first note that $n\alpha\le d$. If $n\alpha<d$, then we use the first inequality in \eqref{400} to have
    \begin{align*}
        I_{2,1}
        &\lesssim \int_{\partial\Omega} \frac{\df \sigma (y)}{|x-y|^{2(d-n\alpha)}}\int_{|x-y|\leq 4|y-y^\prime|} \frac{\df \sigma (y^\prime)}{|y-y^\prime|^{2\nu+d}} \\
        &\lesssim \int_{\partial\Omega} \frac{\df \sigma (y)}{|x-y|^{2(d-n\alpha)+2\nu}}\lesssim 1.
    \end{align*}
    If $n\alpha=d$, then we use the second inequality in \eqref{400} to have
    \begin{align*}
        I_{2,1}
        &\lesssim \int_{\partial\Omega} \df \sigma (y)\, |\log |x-y||\int_{|x-y|\leq 4|y-y^\prime|} \frac{\df \sigma (y^\prime)}{|y-y^\prime|^{2\nu+d}} \\
        &\lesssim \int_{\partial\Omega} \frac{|\log |x-y||}{|x-y|^{2\nu}}\,\df \sigma (y)\lesssim 1,
    \end{align*}
    where the last inequality holds since $\nu <(2n\alpha-d)/2=d/2$.

    Since $|x-y^\prime|\leq |x-y|+|y-y^\prime|\leq 5|y-y^\prime|$ if $|x-y|\leq 4|y-y^\prime|$, it can be proved similarly that $I_{2,2} \lesssim 1$. This completes the proof when $\alpha\leq d/2$.

    Suppose now that $\alpha > d/2$. Then $d=1$, $1/2 < \alpha \le 1$, and $n=1$. We also have $\nu < 1$ in this case.
    By Lemma \ref{lemm_L1_kernel_estimate}, we have
    \begin{align*}
        &\int_{2|y-y^\prime|<|x-y|} \frac{|L_1(x, y)-L_1(x, y^\prime)|^2}{|y-y^\prime|^{2\nu+1}}\, \df \sigma (y)\df \sigma (y^\prime) \\
        &\lesssim \int_{\partial\Omega} \frac{\df \sigma (y)}{|x-y|^{4-2\alpha}}\int_{2|y-y^\prime|<|x-y|} \frac{\df \sigma (y^\prime)}{|y-y^\prime|^{2\nu-1}} \\
        &\lesssim \int_{\partial\Omega} \frac{\df \sigma (y)}{|x-y|^{4-2\alpha+2\nu-2}} \lesssim 1,
    \end{align*}
    where the second inequality holds since $\nu<1$ and the third one since $\nu<(2\alpha-1)/2$.

    In the region $2|y-y^\prime|\geq |x-y|$, we use \eqref{500} to have
    \begin{align*}
        &\int_{2|y-y^\prime|\geq |x-y|} \frac{|L_1(x, y)|^2}{|y-y^\prime|^{2\nu+1}}\, \df \sigma (y)\df \sigma (y^\prime) \\
        &\lesssim \int_{\partial\Omega} \frac{\df \sigma (y)}{|x-y|^{2-2\alpha}}\int_{2|y-y^\prime|\geq |x-y|} \frac{\df \sigma (y^\prime)}{|y-y^\prime|^{2\nu+1}} \\
        &\lesssim \int_{\partial\Omega} \frac{\df \sigma (y)}{|x-y|^{2-2\alpha+2\nu}}\lesssim 1.
    \end{align*}
    Since $|x-y^\prime|\leq |x-y|+|y-y^\prime|\leq 3|y-y^\prime|$ if $2|y-y^\prime|\geq |x-y|$, one can show in the same way that
    \begin{align*}
        &\int_{2|y-y^\prime|\geq |x-y|} \frac{|L_1(x, y^\prime)|^2}{|y-y^\prime|^{2\nu+1}}\, \df \sigma (y)\df \sigma (y^\prime) \lesssim 1.
    \end{align*}
    It then follows that
    \[
    \int_{2|y-y^\prime|\geq |x-y|} \frac{|L_1(x, y)-L_1(x, y^\prime)|^2}{|y-y^\prime|^{2\nu+1}}\, \df \sigma (y)\df \sigma (y^\prime) \lesssim 1.
    \]
    This completes the proof.
\end{proof}

\subsection{Schatten class and decay rates}

We denote for $p >0$ the $p$-th Schatten class associated with a Hilbert space $\Hcal$ by $\Scal^p (\Hcal)$. It is the class of compact operators $\Acal$ on $\Hcal$ such that $\sum_{j=1}^\infty s_j(\Acal)^p <\infty$, where $s_j(\Acal)$ is
the $j$-th singular value of $\Acal$ on $\Hcal$ enumerated in descending order, namely,
\[
    s_1 (\Acal)\geq s_2(\Acal)\geq \cdots (\to 0).
\]
We denote eigenvalues of $\Acal$ by $\lambda_j (\Acal)$, which are enumerated in the order descending in the absolute values, namely,
\[
|\lambda_1 (\Acal)| \geq |\lambda_2 (\Acal)| \geq \cdots (\to 0).
\]
The following lemma shows the decay rate of eigenvalues of compact operators in the Schatten class.

\begin{lemm}
    \label{lemm_schatten_decay}
    Let $p> 0$. If a compact operator $\Acal$ on a Hilbert space $\Hcal$ belongs to the Schatten class $\Scal^p (\Hcal)$, then we have
    \[
        |\lambda_j (\Acal)|=o(j^{-1/p})
    \]
    as $j\to \infty$.
\end{lemm}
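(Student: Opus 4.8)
The plan is to combine the Weyl inequality relating eigenvalues and singular values of a compact operator with an elementary tail estimate for convergent series with monotone terms.

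First I would invoke the Weyl inequality (the form of ``Weyl's lemma'' referred to in the Introduction): for every compact operator $\Acal$ on $\Hcal$ and every $p>0$,
\[
    \sum_{j=1}^\infty |\lambda_j (\Acal)|^p \leq \sum_{j=1}^\infty s_j (\Acal)^p .
\]
This follows from the multiplicative Weyl inequality $\prod_{j=1}^n |\lambda_j(\Acal)| \leq \prod_{j=1}^n s_j(\Acal)$, valid for every $n$, together with the Hardy--Littlewood--P\'olya majorization inequality applied to the increasing convex function $t \mapsto \e^{pt}$. Since $\Acal \in \Scal^p(\Hcal)$, the right-hand side is finite, so the non-increasing non-negative sequence $a_j := |\lambda_j(\Acal)|^p$ satisfies $\sum_{j=1}^\infty a_j < \infty$.

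Next I would show that any non-increasing non-negative sequence $(a_j)$ with $\sum_j a_j < \infty$ satisfies $j a_j \to 0$. Indeed, setting $m = \gss{j/2}$, monotonicity of $(a_j)$ gives
\[
    \tfrac{j}{2}\, a_j \leq (j-m)\, a_j \leq \sum_{i=m+1}^j a_i \leq \sum_{i>m} a_i ,
\]
and the last expression is the tail of a convergent series, hence tends to $0$ as $j \to \infty$ (note that $m\to\infty$). Therefore $j a_j \to 0$. Applying this to $a_j = |\lambda_j(\Acal)|^p$ yields $j\,|\lambda_j(\Acal)|^p \to 0$, that is, $|\lambda_j(\Acal)| = o(j^{-1/p})$, as claimed.

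There is no genuine obstacle here; the only point requiring care is to extract the little-$o$ decay rather than the weaker bound $|\lambda_j(\Acal)| = O(j^{-1/p})$. This is why the tail is estimated over the index block $(m, j]$ with $m=\gss{j/2}$, so that one may use that the tail of a convergent series vanishes, instead of the cruder inequality $j a_j \leq \sum_{i=1}^j a_i$, which would only give boundedness. One should also be sure to use the version of the Weyl inequality valid for all $p>0$ (not merely $p\geq 1$), which is precisely why the majorization argument is invoked.
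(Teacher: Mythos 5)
Your proposal is correct and follows essentially the same route as the paper: the Weyl inequality $\sum_j |\lambda_j(\Acal)|^p \le \sum_j s_j(\Acal)^p$ followed by the elementary observation that a non-increasing summable sequence $a_j$ satisfies $j a_j \to 0$. The only cosmetic difference is the bookkeeping in the tail estimate (you take $m=\lfloor j/2\rfloor$ growing with $j$, while the paper fixes $N$, takes $\limsup_{j\to\infty}$, and then lets $N\to\infty$), and your sketch of why the Weyl inequality holds for all $p>0$ replaces the paper's citation of Gohberg--Kre\u{\i}n.
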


\begin{proof}
    By the Weyl inequality, we have
    \[
        \sum_{j=1}^\infty |\lambda_j (\Acal)|^p \leq \sum_{j=1}^\infty s_j (\Acal)^p<\infty
    \]
    (see \cite[p.93]{Gohberg-Krein69} for example). Let $N$ be an arbitrary integer. Since $|\lambda_j(\Acal)|\geq |\lambda_{j+1}(\Acal)|$ for all $j\geq 1$, we have
    \[
        (j-N) |\lambda_j (\Acal)|^p\leq \sum_{k=N+1}^j |\lambda_j (\Acal)|^p\leq \sum_{k=N+1}^\infty |\lambda_k (\Acal)|^p
    \]
    for any $j>N$. Since $|\lambda_j(\Acal)|\to 0$ as $j\to \infty$, we have
    \[
        \limsup_{j\to \infty} j|\lambda_j (\Acal)|^p=\limsup_{j\to\infty}((j-N)|\lambda_j(\Acal)|^p)\leq \sum_{k=N+1}^\infty |\lambda_k(\Acal)|^p.
    \]
    By sending $N\to \infty$, we obtain $\limsup_{j\to \infty} j|\lambda_j(\Acal)|^p=0$.
\end{proof}

To prove Theorem \ref{theo_eigenvalue_decay}, we use a result of \cite{Delgado-Ruzhansky14}, which states that if an integral kernel belongs to a certain Sobolev space, then the associated integral operator belongs to a certain Schatten class.

\begin{theo}[{\cite[Theorem 3.6]{Delgado-Ruzhansky14}}]
    \label{theo_Delgado_Ruzhansky}
    Let $M$ be a $d$-dimensional compact oriented manifold with some nowhere-vanishing smooth density fixed (for example, equip $M$ with a Riemannian metric and consider the associated volume form). Let $\Acal$ be the integral operator
    \[
        \Acal [f] (x)=\int_M A(x, y)f(y)\, \df y
    \]
    and let $\mu, \nu\geq 0$. If $A(x, y)\in H^{\mu, \nu}(M\times M)$, then we have $\Acal\in \Scal^p (L^2(M))$ for all
    \[
        p>\frac{2d}{d+2(\mu+\nu)}.
    \]
\end{theo}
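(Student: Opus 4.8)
This is a general statement about integral operators on a compact manifold, and the natural route is to factor $\Acal$ through a Hilbert--Schmidt operator and two negative powers of an elliptic operator, and then invoke the Hölder inequality for Schatten classes. First fix a Riemannian metric on $M$ and let $\GL:=(1-\GD_M)^{1/2}$, where $\GD_M$ is the Laplace--Beltrami operator; this is a positive self-adjoint elliptic classical pseudodifferential operator of order $1$ on $M$. By Weyl's law the eigenvalues of $-\GD_M$ grow like $j^{2/d}$, hence those of $\GL$ grow like $j^{1/d}$, so for $s>0$ the singular values of $\GL^{-s}$ decay like $j^{-s/d}$; therefore $\GL^{-s}\in\Scal^q(L^2(M))$ for every $q>d/s$. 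We take the definition of $H^{\mu,\nu}(M\times M)$ to be the one consistent with the $\mu=0$ case recorded in \eqref{eq_sobolev_double}, namely that $A\in H^{\mu,\nu}(M\times M)$ exactly when $(\GL^\mu_x\GL^\nu_y)A\in L^2(M\times M)$, where $\GL_x$ and $\GL_y$ denote $\GL$ acting in the first and second variable respectively.

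The key step is the factorization. Put $\Bcal:=\GL^\mu\Acal\GL^\nu$. Using the symmetry of $\GL^\nu$ one checks that the Schwartz kernel of $\Bcal$ equals $B(x,y)=(\GL^\mu_x\GL^\nu_y A)(x,y)$, which by the previous paragraph lies in $L^2(M\times M)$; hence $\Bcal$ is Hilbert--Schmidt, $\Bcal\in\Scal^2(L^2(M))$, with $\|\Bcal\|_{\Scal^2}=\|B\|_{L^2(M\times M)}$. Inverting the construction gives
\[
\Acal=\GL^{-\mu}\,\Bcal\,\GL^{-\nu},
\]
where $\GL^{-\mu}$ (resp.\ $\GL^{-\nu}$) is read as the identity when $\mu=0$ (resp.\ $\nu=0$).

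It remains to combine the pieces. If $\mu=\nu=0$ then $\Acal=\Bcal\in\Scal^2$, and since $\Scal^2\subseteq\Scal^p$ for every $p\ge 2$ the claim holds (here $2d/(d+2(\mu+\nu))=2$). If $\mu+\nu>0$, apply the Hölder inequality for Schatten norms: for $S\in\Scal^{q_1}$, $T\in\Scal^2$, $U\in\Scal^{q_2}$ with $\tfrac1{p_0}=\tfrac1{q_1}+\tfrac12+\tfrac1{q_2}$ one has $STU\in\Scal^{p_0}$. Taking $S=\GL^{-\mu}$, $U=\GL^{-\nu}$ (with $1/q_i=0$, i.e.\ the factor dropped, when the corresponding order is $0$), and letting $1/q_1+1/q_2$ increase towards $(\mu+\nu)/d$, we obtain $\Acal\in\Scal^{p_0}$ for every $p_0\in(\tfrac{2d}{d+2(\mu+\nu)},\,2]$. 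The nesting $\Scal^{p_0}\subseteq\Scal^p$ for $p\ge p_0$ then gives $\Acal\in\Scal^p$ for all $p>\tfrac{2d}{d+2(\mu+\nu)}$, as claimed.

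\textbf{Main obstacle.}
The genuinely delicate point is the justification of the factorization at the level of kernels when $A$ is only a Sobolev function rather than smooth: one must verify that $\GL^\mu\Acal\GL^\nu$ is a well-defined bounded (in fact Hilbert--Schmidt) operator whose kernel is $(\GL^\mu_x\GL^\nu_y)A$. The clean way is a density argument --- the identity is transparent for smooth $A$, and both sides depend continuously on $A$ in the $H^{\mu,\nu}$-topology, so it passes to the limit --- but this presupposes that a precise definition of $H^{\mu,\nu}(M\times M)$ and the equivalence of norms have been fixed (the excerpt records this only for $\mu=0$). One should also track the strictness of the inequality defining the admissible exponents, since the endpoint $p=2d/(d+2(\mu+\nu))$ is not asserted when $\mu+\nu>0$; that strictness is exactly what the open ranges of $q_1,q_2$ encode. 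An essentially equivalent alternative is complex interpolation of the linear map $A\mapsto\Acal$ against the base case $\mu=\nu=0$ (where $A\in L^2(M\times M)$ yields $\Acal\in\Scal^2$), but the factorization above is the most transparent presentation.
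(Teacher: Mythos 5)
The paper offers no proof of this statement: it is imported verbatim from \cite[Theorem 3.6]{Delgado-Ruzhansky14}, so there is no internal argument to compare against. Your proof---conjugating by powers of $\GL=(1-\GD_M)^{1/2}$ so that the middle factor has kernel $\GL^\mu_x\GL^\nu_y A\in L^2(M\times M)$ and is Hilbert--Schmidt, placing $\GL^{-s}$ in $\Scal^{q}(L^2(M))$ for $q>d/s$ via Weyl asymptotics, and combining the factors with the Schatten--H\"older inequality (with the density argument you flag to justify the kernel identity for non-smooth $A$)---is correct and is essentially the argument given in the cited reference.
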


By Theorem \ref{theo_Delgado_Ruzhansky}, we can prove that the NP operator $\Kcal^*$ belongs to certain Schatten class depending on the smoothness of $\partial \Omega$.
In what follows, we simply denote $\Scal^p (L^2(\partial\Omega))$ by $\Scal^p$.

\begin{theo}
    \label{theo_singular_value_decay}
    If $0<\alpha\le 1$ and $\partial\Omega$ is $C^{1, \alpha}$, then we have $\Kcal^*\in \Scal^p$ for all
    $p>d/\Ga$.
\end{theo}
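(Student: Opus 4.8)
The plan is to bootstrap from the single-kernel estimate $|L_1(x,y)| \lesssim |x-y|^{-d+\alpha}$ of \eqref{500} to a Sobolev-regularity statement for a suitable power $\Lcal_n$ of the NP operator, and then invoke Theorem \ref{theo_Delgado_Ruzhansky} together with the multiplicativity of Schatten norms under composition. Concretely, let $n$ be the smallest integer exceeding $d/(2\alpha)$, so that $n\alpha > d/2$. Lemma \ref{lemm_kernel_sobolev} gives $L_n \in H^{0,\nu}(\partial\Omega\times\partial\Omega)$ for every $\nu \in (0,(2n\alpha-d)/2)$. Applying Theorem \ref{theo_Delgado_Ruzhansky} with $\mu=0$ and this $\nu$, we get $\Lcal_n \in \Scal^{p_n}$ for all
\[
    p_n > \frac{2d}{d+2\nu}.
\]
Since $\nu$ can be taken arbitrarily close to $(2n\alpha-d)/2$, the exponent $\frac{2d}{d+2\nu}$ can be pushed arbitrarily close to $\frac{2d}{d+(2n\alpha-d)}=\frac{2d}{2n\alpha}=\frac{d}{n\alpha}$. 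Hence $\Lcal_n \in \Scal^{p}$ for every $p > d/(n\alpha)$.

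Next I would transfer this back to $\Kcal^*$ itself using the definition \eqref{300} of $\Lcal_n$ and the standard fact that if $\Acal \in \Scal^p$ then $\Acal$ composed with a bounded operator stays in $\Scal^p$, and more precisely that a product of $n$ bounded operators each of which would individually sit in $\Scal^{np}$ lies in $\Scal^p$ (Hölder for Schatten norms: $\|\Acal_1\cdots\Acal_n\|_{\Scal^p} \le \prod_j \|\Acal_j\|_{\Scal^{np}}$). In the present situation $\Lcal_n$ is a product of $n$ factors, each equal to $\Kcal^*$ or $\Kcal$; since $\Kcal$ is (up to the inner-product realization) the adjoint of $\Kcal^*$, the two have the same singular values, so $\Kcal^* \in \Scal^q$ if and only if $\Kcal \in \Scal^q$ with equal $\Scal^q$-norms. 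Running the Hölder inequality in reverse is not literally available, but the clean way is: assume $\Kcal^* \in \Scal^q$; then each of the $n$ factors lies in $\Scal^q$, hence $\Lcal_n \in \Scal^{q/n}$. We want to conclude the converse implication at the level of exponents, i.e. from $\Lcal_n \in \Scal^{p}$ for all $p>d/(n\alpha)$ deduce $\Kcal^* \in \Scal^{q}$ for all $q>d/\alpha$. For this one uses instead the singular-value bound $s_{jn}(\Lcal_n) \le s_j(\Kcal^*)^?$—more robustly, the elementary inequality $s_{2j-1}(\Acal\Bcal) \le s_j(\Acal)s_j(\Bcal)$ iterated $n$ times gives $s_{n(j-1)+1}(\Lcal_n) \le s_j(\Kcal)^a s_j(\Kcal^*)^b$ with $a+b=n$, and since $s_j(\Kcal)=s_j(\Kcal^*)$ this reads $s_{n(j-1)+1}(\Lcal_n) \le s_j(\Kcal^*)^n$. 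Therefore $s_j(\Kcal^*)^n \ge s_{nj}(\Lcal_n)$, equivalently $s_j(\Kcal^*) \le s_{\lceil j/n\rceil}(\Lcal_n)^{1/n}$ after reindexing, which yields, for any $p>d/(n\alpha)$,
\[
    \sum_j s_j(\Kcal^*)^{np} \lesssim \sum_j s_{\lceil j/n\rceil}(\Lcal_n)^{p} \lesssim n\sum_k s_k(\Lcal_n)^p < \infty,
\]
so $\Kcal^* \in \Scal^{np}$. As $p$ ranges over $(d/(n\alpha),\infty)$, $np$ ranges over $(d/\alpha,\infty)$, which is exactly the claimed conclusion $\Kcal^* \in \Scal^q$ for all $q>d/\alpha$.

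The main obstacle is the direction of the implication: Theorem \ref{theo_Delgado_Ruzhansky} gives Schatten membership for the \emph{composition} $\Lcal_n$, whereas we want it for $\Kcal^*$, so one must extract per-factor singular-value decay from a decay bound on the product. The inequality $s_{i+j-1}(\Acal\Bcal)\le s_i(\Acal)\|\Bcal\| $ and, more to the point, $s_{i+j-1}(\Acal\Bcal)\le s_i(\Acal)s_j(\Bcal)$ (valid for compact operators) is the right tool, but one has to be slightly careful: a priori only the product $\Lcal_n$ is known to be compact, not each factor—however here $\Kcal^*$ is already known to be compact on $H^{-1/2}(\partial\Omega)$ for $C^{1,\alpha}$ boundaries (Corollary \ref{coro_NP_compact_Sobolev}), and its nonzero-eigenvalue eigenfunctions lie in $L^2(\partial\Omega)$, so working on $L^2(\partial\Omega)$ is legitimate and $\Kcal,\Kcal^*$ are genuinely compact there. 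A second minor point is checking the hypotheses of Lemma \ref{lemm_kernel_sobolev}, namely that the chosen $n$ (smallest integer $>d/(2\alpha)$) indeed satisfies $n\le d/\alpha$ when $\alpha\le d/2$ and gives $n=1$ when $\alpha>d/2$; this is an elementary arithmetic verification, handled by the case split already present in that lemma's proof. With these pieces in place the result follows.
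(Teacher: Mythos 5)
Your first half matches the paper exactly: choosing $n$ as the smallest integer exceeding $d/(2\alpha)$, invoking Lemma \ref{lemm_kernel_sobolev} and Theorem \ref{theo_Delgado_Ruzhansky} to get $\Lcal_n\in\Scal^p$ for all $p>d/(n\alpha)$ is precisely how the paper starts. The problem is the transfer from $\Lcal_n$ back to $\Kcal^*$, where your argument has a genuine gap. The inequality $s_{i+j-1}(\Acal\Bcal)\le s_i(\Acal)s_j(\Bcal)$, iterated, gives $s_{n(j-1)+1}(\Lcal_n)\le s_j(\Kcal^*)^n$, i.e.\ an \emph{upper} bound on the singular values of the product in terms of those of the factor. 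You then write ``equivalently $s_j(\Kcal^*)\le s_{\lceil j/n\rceil}(\Lcal_n)^{1/n}$,'' but the correct reformulation of $s_j(\Kcal^*)^n\ge s_{nj}(\Lcal_n)$ is $s_j(\Kcal^*)\ge s_{nj}(\Lcal_n)^{1/n}$ — the inequality points the wrong way, and the summation estimate that follows rests on this false step. No general product inequality can close this gap: for arbitrary compact factors the product can be far smaller than the factors (even zero), so Schatten membership of $\Lcal_n$ alone, via such inequalities, says nothing about $\Kcal^*$. Your own earlier remark that ``running the H\"older inequality in reverse is not literally available'' is exactly the obstruction; the attempted workaround does not overcome it.

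What makes the transfer work is the special algebraic structure of $\Lcal_n$ in \eqref{300}, which the paper exploits: whether $n$ is odd or even one has the exact identity $\Lcal_n^*\Lcal_n=(\Kcal\Kcal^*)^n$, and since $\Kcal\Kcal^*$ is a nonnegative self-adjoint compact operator on $L^2(\partial\Omega)$, the spectral theorem (with monotonicity of $t\mapsto t^{n}$ on $[0,\infty)$) gives the \emph{equality}
\[
    s_j(\Lcal_n)=\lambda_j\bigl((\Lcal_n^*\Lcal_n)^{1/2}\bigr)=\lambda_j\bigl((\Kcal\Kcal^*)^{1/2}\bigr)^{n}=s_j(\Kcal^*)^{n},
\]
not merely a one-sided bound. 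With this identity, $\sum_j s_j(\Kcal^*)^p=\sum_j s_j(\Lcal_n)^{p/n}<\infty$ whenever $p/n>d/(n\alpha)$, i.e.\ $p>d/\alpha$, which is the claimed conclusion. So the missing idea is to use that $\Lcal_n$ was built precisely so that its modulus is $(\Kcal\Kcal^*)^{n/2}$, turning the inaccessible ``reverse H\"older'' step into an exact computation of singular values; once you replace your inequality manipulation by this identity, the rest of your outline (including the arithmetic remark about $n\le d/\alpha$ versus $n=1$, which is indeed only needed inside Lemma \ref{lemm_kernel_sobolev}) goes through.
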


\begin{proof}
    Let $n$ be the smallest integer larger than $d/(2\alpha)$. We infer from Lemma \ref{lemm_kernel_sobolev} and Theorem \ref{theo_Delgado_Ruzhansky} that $\Lcal_n \in \Scal^p (L^2(\partial\Omega))$ for all
    \[
        p>\frac{2d}{d+2(2n\alpha-d)/2}=\frac{d}{n\alpha}.
    \]
    Since $\Kcal\Kcal^*$ is a nonnegative self-adjoint operator on $L^2(\partial\Omega)$ and $\Lcal_n^*\Lcal_n=(\Kcal\Kcal^*)^n$, we have
    \[
        s_j (\Lcal_n)=\lambda_j ((\Lcal_n^*\Lcal_n)^{1/2})=\lambda_j((\Kcal\Kcal^*)^{1/2})^n =s_j (\Kcal^*)^n.
    \]
    Thus, if $p>d/\alpha$, then we have
    \[
        \sum_{j=1}^\infty s_j(\Kcal^*)^p=\sum_{j=1}^\infty s_j (\Lcal_n)^{p/n}<\infty.
    \]
    Thus, $\Kcal^*\in \Scal^p$.
\end{proof}

Theorem \ref{theo_eigenvalue_decay} follows immediately.

\begin{proof}[Proof of Theorem \ref{theo_eigenvalue_decay}]
    Since $\partial\Omega$ is $C^{1,\alpha}$ for some $\alpha>0$, a non-zero eigenvalue of $\Kcal^*$ on $H^{-1/2}(\partial\Omega)$ is also an eigenvalue on $L^2(\partial\Omega)$, and vice versa (see Corollary \ref{coro_eigenfunction_smooth}). We apply Theorem \ref{theo_singular_value_decay} and obtain $\Kcal^* \in \Scal^p$ for all $p>d/\alpha$. Hence, \eqref{eq_eigenvalue_decay} follows by Lemma \ref{lemm_schatten_decay}.
\end{proof}

\section{Proof of Theorem \ref{theo_eigenvalue_decay_2d_smoother}}\label{theorem1.2}

\subsection{Estimate of tangential derivatives}

We begin with the estimate of the kernel analogous to Lemma \ref{lemm_L1_kernel_estimate}. We note that the function $y\mapsto L_1(x, y)$ is $C^{k, \alpha}$ is $k$-times continuously differentiable in $\partial\Omega \setminus \{x\}$ for fixed $x\in \partial\Omega$ and the $k$th derivative $\partial_{\tau_y}^k L_1(x, y)$ is continuous in the space
\[
    (\partial\Omega\times \partial\Omega)\setminus \{ (x, x) \mid x\in \partial\Omega\}.
\]
Here and throughout this section $\partial_{\tau}$ denotes the tangential derivative along $\partial\Omega$.

\begin{lemm}
    \label{lemm_tangential_derivative_ckb}
    Let $\Omega$ be a bounded domain in $\Rbb^{2}$ with $C^{k, \alpha}$ boundary $\partial\Omega$ for some positive integer $k$ and $\alpha \in (0,1]$. There exists a constant $C>0$ such that the inequality
    \begin{equation}
        |\partial_{\tau_y}^l L_1(x, y)|\leq \frac{C}{|x-y|^{2-k+l-\alpha}} \label{eq_tangential_derivative_k-1_ckb}
    \end{equation}
    holds for all $x, y\in \partial\Omega$ with $x\neq y$ and $l=0, 1, \ldots, k$. In particular, we have
    \begin{equation}
        \label{eq_tangential_derivative_k_ckb}
        |\partial_{\tau_y}^{k-1} L_1(x, y)-\partial_{\tau_y}^{k-1} L_1(x, y^\prime)|\leq \frac{C|y-y^\prime|}{|x-y|^{2-\alpha}}
    \end{equation}
    if $2|y-y^\prime|<|x-y|$.
\end{lemm}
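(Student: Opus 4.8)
The plan is to write the kernel $L_1(x,y)$ explicitly in a local parametrization of $\partial\Omega$ near a fixed point and differentiate $l$ times with respect to the tangential variable, keeping careful track of the order of vanishing of the numerator and the order of the singularity in the denominator. Recall that
\[
    L_1(x,y)=\frac{1}{\omega_2}\,\frac{(x-y)\cdot\nv_x}{|x-y|^{2}},
\]
so for fixed $x$ the $y$-dependence enters only through $y$ (in the numerator $(x-y)\cdot\nv_x$) and through $|x-y|^{2}$. Parametrize $\partial\Omega$ near a point by arclength or by a $C^{k,\alpha}$ chart $\gamma(s)$ with $\gamma(s_y)=y$; then $\partial_{\tau_y}=\partial_s$ acts on $\gamma(s)$, and $\gamma$ has $k$ derivatives, the $k$-th being $C^{0,\alpha}$. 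The first key observation, already used in the proof of Lemma \ref{lemm_L1_kernel_estimate}, is the cancellation $|\nv_x\cdot(x-y)|\lesssim|x-y|^{1+\alpha}$ coming from $\partial\Omega$ being $C^{1,\alpha}$; more generally, because the tangent vector $\gamma'(s)$ is $C^{k-1,\alpha}$, the scalar function $s\mapsto \nv_x\cdot(x-\gamma(s))$ has the property that its derivatives up to order $k-1$ are controlled and, evaluated near $s=s_x$, vanish to the appropriate order: $\nv_x\cdot(x-\gamma(s))$ vanishes to order $2$ at $s=s_x$ in the $C^{1,1}$ case and, with the $C^{k,\alpha}$ hypothesis, one gains $k-1+\alpha$ powers of $|x-y|$ over the generic size.

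The execution would proceed by the Leibniz rule applied to $L_1(x,\gamma(s)) = \big(\nv_x\cdot(x-\gamma(s))\big)\cdot|x-\gamma(s)|^{-2}$. Each application of $\partial_s$ either hits the numerator $N(s):=\nv_x\cdot(x-\gamma(s))$ or the factor $|x-\gamma(s)|^{-2}$. Differentiating $|x-\gamma(s)|^{-2}$ a total of $m$ times produces terms bounded by $|x-\gamma(s)|^{-2-m}$ (using that the derivatives of $\gamma$ up to order $k\ge m$ are bounded); differentiating $N(s)$ a total of $j\le l\le k$ times: for $j=0$ we use $|N(s)|\lesssim|x-y|^{1+\alpha}$, for $1\le j\le k-1$ the Taylor expansion of $\gamma$ at $s_x$ gives $|N^{(j)}(s)|\lesssim|x-y|^{1+\alpha-j}$ when $l\le k-1$ (and $|N^{(j)}(s)|\lesssim|x-y|$ needs only $j\le 1$, etc.), while the borderline derivative $N^{(k)}$ is merely bounded (by the $C^{k,\alpha}$ norm of $\gamma$), i.e. $|x-y|^{0}$. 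Summing the resulting products $|x-y|^{\,1+\alpha-j}\cdot|x-y|^{-2-(l-j)}=|x-y|^{-2+l-\alpha+? }$ — one checks that the worst term, which puts as few derivatives on $N$ as possible, yields exactly $|x-y|^{-(2-k+l-\alpha)}$ when $l\le k$, giving \eqref{eq_tangential_derivative_k-1_ckb}. For the Hölder-type bound \eqref{eq_tangential_derivative_k_ckb} with $l=k-1$, one applies the mean value theorem to $s\mapsto\partial_{\tau_y}^{k-1}L_1(x,\gamma(s))$ on the segment between $s_y$ and $s_{y'}$: the derivative is $\partial_{\tau_y}^{k}L_1$, which by \eqref{eq_tangential_derivative_k-1_ckb} with $l=k$ is bounded by $|x-y|^{-(2-\alpha)}$ uniformly on that segment, using the hypothesis $2|y-y'|<|x-y|$ to ensure $|x-\gamma(s)|\sim|x-y|$ throughout; this produces the factor $|y-y'|$ and the denominator $|x-y|^{2-\alpha}$.

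The main obstacle is the bookkeeping of the numerator estimates $|N^{(j)}(s)|\lesssim|x-y|^{1+\alpha-j}$ and verifying they are available precisely for the range of $j$ that actually occurs: we only have $k$ derivatives on $\gamma$ with the $k$-th being Hölder, so the single "worst" Leibniz term that would require differentiating $N$ more than $k-1$ times and still using the $|x-y|^{1+\alpha-j}$ gain must be isolated and estimated using only boundedness of $N^{(k)}$; one has to check that this term is still consistent with the claimed exponent $2-k+l-\alpha$ (it is, because giving up one power of vanishing in $N$ is compensated by one fewer power of singularity elsewhere). A clean way to organize this is to expand $\gamma$ in its degree-$(k-1)$ Taylor polynomial at $s_x$ with $C^{0,\alpha}$ remainder and substitute, so that $N(s)$ becomes a polynomial of degree $k-1$ in $(s-s_x)$ starting with the quadratic term (the linear term vanishes since $\gamma'(s_x)$ is tangent, hence $\perp\nv_x$) plus an $O(|s-s_x|^{k+\alpha})$ remainder, and $|x-\gamma(s)|^2 = (s-s_x)^2(1+O(|s-s_x|))$; then the bounds \eqref{eq_tangential_derivative_k-1_ckb} are transparent after differentiating term by term. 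The $d=1$ restriction is used implicitly in that $\partial_{\tau_y}$ is a single well-defined vector field and arclength parametrization is available globally on each component of $\partial\Omega$.
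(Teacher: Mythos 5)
Your overall plan (parametrize by a $C^{k,\alpha}$ curve $\gamma$, exploit $\gamma'(s_x)\perp\nu_x$, differentiate in the parameter, and get \eqref{eq_tangential_derivative_k_ckb} from the $l=k$ bound by the mean value theorem on the region $2|y-y'|<|x-y|$, where $|x-\gamma(\sigma)|\sim|x-y|$) is indeed the paper's plan, and your closing ``clean way'' is essentially the paper's actual argument. But the execution you give in the main body has a genuine gap: the term-by-term Leibniz bounds for $N(s)D(s)^{-1}$, with $N(s)=\nu_x\cdot(x-\gamma(s))$ and $D(s)=|x-\gamma(s)|^2$, are neither available nor sufficient. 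The gain $|N^{(j)}(s)|\lesssim|x-y|^{1+\alpha-j}$ reflects genuine vanishing only for $j=0,1$; for $j\geq 2$ one has $N^{(j)}(s)=-\nu_x\cdot\gamma^{(j)}(s)$ with $\nu_x\cdot\gamma^{(j)}(s_x)\neq 0$ in general, so $N^{(j)}$ is merely $O(1)$ and your stated bound is no gain at all. Feeding your own bounds into the Leibniz sum gives at best $|x-y|^{\alpha-1-l}$, which is the target exponent only when $k=1$; for $k\geq 2$ no term-by-term estimate can reach $|x-y|^{k-2+\alpha-l}$, because the smallness of $\partial_s^l L_1$ comes from cancellation between Leibniz terms. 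The unit circle makes this concrete: there $L_1$ is a nonzero constant, so all its derivatives vanish, while individual terms $N^{(j)}\partial_s^{l-j}D^{-1}$ blow up like $|x-y|^{j-2-l}$. Your handling of the top term is also insufficient: for $l=j=k$, mere boundedness of $N^{(k)}$ gives $|x-y|^{-2}$, not the needed $|x-y|^{-2+\alpha}$; the $\alpha$ must come from the H\"older difference $\gamma^{(k)}(\sigma)-\gamma^{(k)}(s_x)$ inside the order-$k$ Taylor remainder (your expansion to degree $k-1$ with an $O(|s-s_x|^{k+\alpha})$ remainder is off by one degree and does not produce this).

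To close the gap you must actually carry out the reorganization you only sketch, which is the paper's proof: expand $\gamma$ to order $k$ at $s_x$ as in \eqref{eq_gamma_taylor}, use $\gamma'(s_x)\cdot\nu_x=0$ to factor the full quadratic vanishing and write $L_1$ as $\frac{(s_x-s)^2}{|x-\gamma(s)|^2}$ times a bracket consisting of a polynomial of degree $k-2$ plus $R_k\cdot\nu_x/(s_x-s)^2$; then $k$ derivatives annihilate the polynomial part (where all the non-vanishing constants $\nu_x\cdot\gamma^{(j)}(s_x)$ live), and the only singular contributions come from the remainder quotient and the prefactor. Two auxiliary estimates, absent from your sketch, are then needed: $\partial_s^l\bigl(R_k/(s_x-s)^m\bigr)=O(|s-s_x|^{k-m-l+\alpha})$ (Lemma \ref{lemm_estimate_remainder}) and, via Fa\`a di Bruno, $\partial_s^l\bigl((s_x-s)^2/|x-\gamma(s)|^2\bigr)=O(1)$ for $l\leq k-1$ and $O(|s-s_x|^{-1+\alpha})$ for $l=k$ (Lemma \ref{lemm_inverse_difference_estimate}); writing $|x-\gamma(s)|^2=(s-s_x)^2(1+O(|s-s_x|))$ does not by itself control $k$ derivatives of the reciprocal. (Note also that the $l\leq k-2$ cases of \eqref{eq_tangential_derivative_k-1_ckb} are only $O(1)$ statements in substance --- again see the circle --- and the cases that matter downstream are $l=k-1$ and $l=k$, exactly the ones your term-by-term bookkeeping cannot deliver; for $k=1$ the naive computation does work, and is essentially Lemma \ref{lemm_L1_kernel_estimate}.) Your mean-value-theorem deduction of \eqref{eq_tangential_derivative_k_ckb} from the $l=k$ bound is fine and matches the paper.
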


For the proof of Lemma \ref{lemm_tangential_derivative_ckb}, we need some preparation. Let $\gamma: \Rbb \to \partial\Omega$, which has the period $1$, be a $C^{k, \alpha}$ parametrization of $\partial\Omega$ such that $\gamma'(t) \neq 0$ for any $t$. By the Taylor theorem, we can expand $\gamma (t)$ as
\begin{equation}
    \label{eq_gamma_taylor}
    \gamma (s)-\gamma (t)=\sum_{j=1}^k \frac{\gamma^{(j)}(t)}{j!}(s-t)^j +R_k (t, s)
\end{equation}
where
\begin{equation}
    \label{eq_gamma_taylor_remainder}
    R_k (t, s):=\frac{1}{(k-1)!}\int_t^s (s-\sigma)^{k-1}(\gamma^{(k)}(\sigma)-\gamma^{(k)}(t))\, \df \sigma.
\end{equation}

\begin{lemm}
    \label{lemm_estimate_remainder}
    For integers $l=0, 1, 2, \ldots, k$ and $m\geq 0$, we have
    \[
        \frac{\partial^l}{\partial s^l}\left(\frac{R_k(t, s)}{(t-s)^m}\right)
        =O(|t-s|^{k-m-l+\alpha})
    \]
    as $|t-s|\to 0$.
\end{lemm}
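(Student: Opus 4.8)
The plan is to use the Hölder continuity of $\gamma^{(k)}$, which is the only regularity input available, together with elementary estimates for iterated integrals, and then to differentiate the quotient by the Leibniz/quotient rule. First I would record the basic estimate for the remainder itself: since $\gamma$ is $C^{k,\alpha}$, we have $|\gamma^{(k)}(\sigma)-\gamma^{(k)}(t)|\lesssim |\sigma-t|^\alpha$ for $\sigma$ between $t$ and $s$, so from \eqref{eq_gamma_taylor_remainder},
\begin{equation*}
    |R_k(t,s)|\le \frac{1}{(k-1)!}\left|\int_t^s |s-\sigma|^{k-1}|\sigma-t|^\alpha\,\df\sigma\right|\lesssim |t-s|^{k+\alpha}.
\end{equation*}
This is the $l=0$, $m=0$ case. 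The key observation is that differentiating $R_k(t,s)$ in $s$ lowers the power by one each time but preserves the Hölder gain: I would show by induction on $l\in\{0,1,\dots,k\}$ that
\begin{equation*}
    \frac{\partial^l}{\partial s^l}R_k(t,s)=O(|t-s|^{k-l+\alpha})
\end{equation*}
as $|t-s|\to 0$. For $l\le k-1$ one differentiates under the integral sign in \eqref{eq_gamma_taylor_remainder} (the boundary term vanishes because the factor $(s-\sigma)^{k-1}$ and its first $k-2$ derivatives in $s$ vanish at $\sigma=s$), producing $\frac{1}{(k-1-l)!}\int_t^s (s-\sigma)^{k-1-l}(\gamma^{(k)}(\sigma)-\gamma^{(k)}(t))\,\df\sigma$, which is $O(|t-s|^{k-l+\alpha})$ by the same computation as above. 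For $l=k$ the integral has collapsed and one is left with $\gamma^{(k)}(s)-\gamma^{(k)}(t)=O(|t-s|^\alpha)$, matching $k-l+\alpha=\alpha$. (Alternatively, one can note $\frac{\partial^l}{\partial s^l}R_k(t,s)$ equals the $(k-l)$-th Taylor remainder of $\gamma^{(l)}$ at $t$, which makes the bound transparent for $C^{k,\alpha}$ curves.)

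Having the bounds on $\partial_s^l R_k$, I would then handle the quotient $R_k(t,s)/(t-s)^m$ by the Leibniz rule:
\begin{equation*}
    \frac{\partial^l}{\partial s^l}\left(\frac{R_k(t,s)}{(t-s)^m}\right)=\sum_{i=0}^l \binom{l}{i}\left(\frac{\partial^i}{\partial s^i}R_k(t,s)\right)\frac{\partial^{l-i}}{\partial s^{l-i}}\big((t-s)^{-m}\big).
\end{equation*}
Since $\frac{\partial^{l-i}}{\partial s^{l-i}}(t-s)^{-m}=O(|t-s|^{-m-(l-i)})$ and $\frac{\partial^i}{\partial s^i}R_k(t,s)=O(|t-s|^{k-i+\alpha})$, the $i$-th term is $O(|t-s|^{k-i+\alpha-m-l+i})=O(|t-s|^{k-m-l+\alpha})$, uniformly in $i$. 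Summing over the finitely many $i$ gives the claimed bound.

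I do not expect a serious obstacle here; the statement is essentially bookkeeping once one sees that every $s$-derivative of $R_k$ retains the $\alpha$-Hölder gain. The only point requiring a little care is the justification of differentiating under the integral sign in \eqref{eq_gamma_taylor_remainder} up to order $k-1$ and the vanishing of the boundary terms at $\sigma=s$, which follows because $\partial_s^j\big[(s-\sigma)^{k-1}\big]\big|_{\sigma=s}=0$ for $0\le j\le k-2$; for $l=k$ one simply evaluates directly rather than differentiating the integrand. A secondary (purely notational) subtlety is the sign/orientation when $s<t$, which is absorbed into the $|t-s|$ in the $O$-notation.
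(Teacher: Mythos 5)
Your proposal is correct and follows essentially the same route as the paper: one first shows $\frac{\partial^l}{\partial s^l}R_k(t,s)=O(|t-s|^{k-l+\alpha})$ by differentiating the integral representation \eqref{eq_gamma_taylor_remainder} (with the collapsed case $l=k$ handled by the H\"older bound on $\gamma^{(k)}$), and then applies the Leibniz rule to the quotient with $(t-s)^{-m}$, each term contributing $O(|t-s|^{k-m-l+\alpha})$. There is no gap; your treatment of the boundary terms and of the $m=0$ versus $m\geq 1$ cases matches the paper's bookkeeping.
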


\begin{proof}
    We first consider the case of $m=0$. We have
    \begin{align*}
        &\frac{\partial^l}{\partial s^l}R_k(t, s) \\
        &=
        \begin{dcases}
            \frac{1}{(k-l-1)!}\int_t^s (s-\sigma)^{k-l-1}(\gamma^{(k)}(\sigma)-\gamma^{(k)}(t))\, \df \sigma & \text{if } 0\leq l\leq k-1, \\
            \gamma^{(k)}(s)-\gamma^{(k)}(t) & \text{if } l=k.
        \end{dcases}
    \end{align*}
    Hence we obtain
    \begin{align*}
        \left|\frac{\partial^l}{\partial s^l}R_k(t, s)\right|
        &\lesssim \begin{dcases}
            \left|\int_t^s (s-\sigma)^{k-l-1}|\sigma-t|^\alpha \, \df \sigma\right| & \text{if } 0\leq l \leq k-1, \\
            |s-t|^\alpha & \text{if } l=k
        \end{dcases} \\
        &\lesssim |s-t|^{k-l+\alpha}.
    \end{align*}

    Next we consider a general $m\geq 1$. By the Leibnitz rule, we have
    \begin{align*}
        \frac{\partial^l}{\partial s^l}\left(\frac{R_k(t, s)}{(t-s)^m}\right)
        &=\frac{1}{(k-1)!(m-1)!}\sum_{r=0}^l \begin{pmatrix}
            l \\
            r
        \end{pmatrix}
        \frac{(m+l-r-1)!}{(t-s)^{m+l-r}}\frac{\partial^r}{\partial s^r}R_k (t, s).
    \end{align*}
    Using estimates for the case $m=0$, we obtain
    \[
        \left|\frac{\partial^l}{\partial s^l}\left(\frac{R_k(t, s)}{(t-s)^m}\right)\right|
        \lesssim \sum_{r=0}^l \frac{1}{|t-s|^{m+l-r}}|s-t|^{k-r+\alpha}
        \lesssim |s-t|^{k-m-l+\alpha}
    \]
    as desired.
\end{proof}

\begin{lemm}
    \label{lemm_inverse_difference_estimate}
    For $l=0, 1, \ldots, k$, we have
    \[
        \frac{\partial^l}{\partial s^l}\frac{(t-s)^2}{|\gamma (t)-\gamma (s)|^2}=
        \begin{cases}
            O(1) & \text{if } 0\leq l\leq k-1, \\
            O(|t-s|^{-1+\alpha}) & \text{if } l=k
        \end{cases}
    \]
    as $|t-s|\to 0$.
\end{lemm}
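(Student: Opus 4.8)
The plan is to pull the singularity out of $|\gamma(t)-\gamma(s)|^2$ using the Taylor expansion \eqref{eq_gamma_taylor}, and thereby reduce the claim to differentiating the reciprocal of a function that stays bounded away from zero near the diagonal. Put
\[
    P(t,s):=\sum_{j=1}^k \frac{\gamma^{(j)}(t)}{j!}(s-t)^{j-1},
\]
so that \eqref{eq_gamma_taylor} becomes $\gamma(s)-\gamma(t)=(s-t)\bigl(P(t,s)+R_k(t,s)/(s-t)\bigr)$ and hence
\[
    \frac{(t-s)^2}{|\gamma(t)-\gamma(s)|^2}=\frac{1}{Q(t,s)},\qquad
    Q(t,s):=\Bigl|\,P(t,s)+\frac{R_k(t,s)}{s-t}\,\Bigr|^2 .
\]
The first point I would record is that $Q$ is uniformly bounded below near the diagonal: Lemma \ref{lemm_estimate_remainder} with $l=0$ and $m=1$ gives $R_k(t,s)/(s-t)=O(|t-s|^{k-1+\alpha})\to 0$, while $P(t,s)\to\gamma'(t)$, so $Q(t,s)\to|\gamma'(t)|^2$; since $\gamma'$ is continuous and nowhere zero on the compact $\Rbb/\Zbb$, there is $c_0>0$ with $Q(t,s)\ge c_0$ once $|t-s|$ is small enough, with $c_0$ independent of $t$. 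Consequently $Q^{-m}=O(1)$ uniformly there for every fixed $m$.

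The next step is to estimate the $s$-derivatives of $Q$ up to order $k$. Expanding the square with the Euclidean inner product on $\Rbb^2$,
\[
    Q=|P|^2+2\Bigl\langle P,\frac{R_k}{s-t}\Bigr\rangle+\Bigl|\frac{R_k}{s-t}\Bigr|^2 .
\]
For fixed $t$, $|P|^2$ is a polynomial in $s$ of degree $2(k-1)$ whose coefficients are controlled by $\|\gamma\|_{C^k}$, so $\partial_s^l|P|^2=O(1)$ for every $l$. For the remaining two terms I would combine the Leibniz rule with Lemma \ref{lemm_estimate_remainder} in the case $m=1$: each $\partial_s^r\bigl(R_k(t,s)/(s-t)\bigr)$ is $O(|t-s|^{k-1-r+\alpha})$ and each $\partial_s^r P$ is $O(1)$, so $\partial_s^l\langle P,R_k/(s-t)\rangle=O(|t-s|^{k-1-l+\alpha})$ and, a fortiori, $\partial_s^l|R_k/(s-t)|^2=O(|t-s|^{2(k-1)-l+2\alpha})$. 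Since the exponent $k-1-l+\alpha$ is $\ge\alpha>0$ when $l\le k-1$ and equals $-1+\alpha$ when $l=k$ (and $2(k-1)-l+2\alpha$ is never smaller than that), I get $\partial_s^l Q=O(1)$ for $0\le l\le k-1$ and $\partial_s^k Q=O(|t-s|^{-1+\alpha})$.

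Finally I would differentiate $1/Q$ by the chain rule (the Fa\`a di Bruno formula): for $1\le l\le k$, $\partial_s^l(1/Q)$ is a finite linear combination of terms $Q^{-(m+1)}\prod_{i=1}^m\partial_s^{l_i}Q$ with $m\ge1$, $l_i\ge1$ and $l_1+\cdots+l_m=l$. Near the diagonal $Q^{-(m+1)}=O(1)$; if all $l_i\le k-1$ then $\prod_i\partial_s^{l_i}Q=O(1)$ by the previous step; the only way a factor $\partial_s^k Q$ can appear is $l=k$, $m=1$, $l_1=k$, which contributes $O(|t-s|^{-1+\alpha})$. Together with the trivial case $l=0$, where $1/Q=O(1)$, this is exactly the dichotomy in the statement. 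I expect no real analytic difficulty here: the one thing that needs care is the combinatorial bookkeeping in the last two steps---verifying that neither the Leibniz expansion of $\partial_s^l Q$ nor the chain-rule expansion of $\partial_s^l(1/Q)$ produces a power of $|t-s|$ worse than the advertised one, and that every implied constant depends only on $\|\gamma\|_{C^{k,\alpha}}$ and $\min_t|\gamma'(t)|$, so that all estimates are uniform in $t$.
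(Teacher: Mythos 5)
Your proposal is correct and follows essentially the same route as the paper: factor $(t-s)^2$ out of $|\gamma(t)-\gamma(s)|^2$ via the Taylor expansion \eqref{eq_gamma_taylor}, bound the $s$-derivatives of $F(t,s)=|\gamma(t)-\gamma(s)|^2/(t-s)^2$ (your $Q$) up to order $k$ by the Leibniz rule together with Lemma \ref{lemm_estimate_remainder}, and then apply the Fa\`a di Bruno expansion of $\partial_s^l(1/F)$, using that $F\to|\gamma'(t)|^2\neq 0$ to control the negative powers of $F$. The only differences are cosmetic (you expand $|P+R_k/(s-t)|^2$ into three terms rather than treating $F$ as a square of $(\gamma(t)-\gamma(s))/(t-s)$, and you spell out the uniform lower bound on $F$), so no further changes are needed.
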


\begin{proof}
    We begin with the estimate of
    \[
        \frac{\partial^l}{\partial s^l}\frac{|\gamma (t)-\gamma (s)|^2}{(t-s)^2}
    \]
    for $0\leq l\leq k$. By the Leibnitz rule, we have
    \[
        \frac{\partial^l}{\partial s^l}\frac{|\gamma (t)-\gamma (s)|^2}{(t-s)^2}
        =\sum_{r=0}^l \begin{pmatrix}
            l \\
            r
        \end{pmatrix}
        \frac{\partial^r}{\partial s^r} \frac{\gamma (t)-\gamma (s)}{t-s}\cdot \frac{\partial^{l-r}}{\partial s^{l-r}} \frac{\gamma (t)-\gamma (s)}{t-s}
    \]
    and thus
    \begin{equation}
        \label{eq_leibnitz_estimate}
        \left|\frac{\partial^l}{\partial s^l}\frac{|\gamma (t)-\gamma (s)|^2}{(t-s)^2}\right|
        \lesssim \sum_{r=0}^l \left|
        \frac{\partial^r}{\partial s^r} \frac{\gamma (t)-\gamma (s)}{t-s}\right|\left|\frac{\partial^{l-r}}{\partial s^{l-r}} \frac{\gamma (t)-\gamma (s)}{t-s}\right|.
    \end{equation}
    By the Taylor expansion \eqref{eq_gamma_taylor} and Lemma \ref{lemm_estimate_remainder}, we have
    \begin{align*}
        \frac{\partial^r}{\partial s^r}\frac{\gamma (t)-\gamma (s)}{t-s}
        &=\frac{\partial^r}{\partial s^r}\left(\sum_{j=1}^k \frac{\gamma^{(j)}(t)}{j!}(s-t)^{j-1} +\frac{R_k (t, s)}{s-t}\right) \\
        &=O(1)+O(|t-s|^{k-1-r+\alpha}) \\
        &=\begin{cases}
            O(1) & \text{if } r\leq k-1, \\
            O(|t-s|^{-1+\alpha}) & \text{if } r=k.
        \end{cases}
    \end{align*}
    It thus follows from \eqref{eq_leibnitz_estimate} that
    \begin{equation}
        \label{eq_leibnitz_estimate_final}
        \left|\frac{\partial^l}{\partial s^l}\frac{|\gamma (t)-\gamma (s)|^2}{(t-s)^2}\right|
        \lesssim
        \begin{cases}
            1 & \text{if } 0\leq l\leq k-1, \\
            |t-s|^{-1+\alpha} & \text{if } l=k.
        \end{cases}
    \end{equation}

    Now we set
    \[
        F(t, s):=\frac{|\gamma (t)-\gamma (s)|^2}{(t-s)^2}.
    \]
    and estimate the derivatives of $1/F(t, s)$. We begin with the formula
    \begin{equation}
        \label{eq_faa_di_bruno}
        \frac{\partial^l}{\partial s^l}\frac{(t-s)^2}{|\gamma (t)-\gamma (s)|^2}
        =\sum_{m=1}^l  \frac{(-1)^m m!}{F(t, s)^{m+1}}\sum_{\substack{i_1+\cdots +i_m=l \\ i_1, \ldots, i_m\geq 1}} \prod_{r=1}^m \frac{\partial^{i_r}}{\partial s^{i_r}}F(t, s).
    \end{equation}
    Since $F(t, s)$ tens to $|\gamma' (t)|^2$ as $s\to t$ and $\gamma'$ is nowhere vanishing, we can estimate \eqref{eq_faa_di_bruno} as
    \begin{equation}
        \label{eq_faa_di_bruno_estimate}
        \begin{aligned}
            \left|\frac{\partial^l}{\partial s^l}\frac{(t-s)^2}{|\gamma (t)-\gamma (s)|^2}\right|
            &\lesssim \sum_{m=1}^l  \sum_{\substack{i_1+\cdots +i_m=l \\ i_1, \ldots, i_m\geq 1}} \prod_{r=1}^m \left|\frac{\partial^{i_r}}{\partial s^{i_r}}F(t, s)\right| \\
            &\lesssim \begin{cases}
                1 & \text{if } 0\leq l \leq k-1, \\
                |t-s|^{-1+\alpha} & \text{if } l=k
            \end{cases}
        \end{aligned}
    \end{equation}
    by using \eqref{eq_leibnitz_estimate_final}. This is the desired estimate.
\end{proof}

Now we are ready to prove Lemma \ref{lemm_tangential_derivative_ckb}.

\begin{proof}[Proof of Lemma \ref{lemm_tangential_derivative_ckb}]
    It is enough to prove
    \[
        \frac{\partial^l}{\partial s^l}L_1 (\gamma (t), \gamma (s))=O(|t-s|^{-2+k-l+\alpha})
    \]
    as $|t-s|\to 0$. Since $\gamma' (t)\cdot \nv_{\gamma (t)}=0$, it follows from the Taylor expansion \eqref{eq_gamma_taylor} that
    \begin{align*}
        &L_1(\gamma (t), \gamma (s)) \\
        &=-\frac{(t-s)^2}{2\pi |\gamma (t)-\gamma (s)|^2}\left(\sum_{j=0}^{k-2} \frac{\gamma^{(j+2)}(t)\cdot \nv_{\gamma (t)}}{(j+2)!}(s-t)^j+\frac{R_k (t, s)\cdot \nv_{\gamma (t)}}{(t-s)^2}\right).
    \end{align*}
    We apply the Leibnitz rule and obtain
    \begin{align*}
        &\frac{\partial^l}{\partial s^l}L_1(\gamma (t), \gamma (s)) \\
        &=-\frac{1}{2\pi}\sum_{r=0}^l \begin{pmatrix}
            l \\
            r
        \end{pmatrix}
        \frac{\partial^{l-r}}{\partial s^{l-r}}\frac{(t-s)^2}{|\gamma (t)-\gamma (s)|^2} \\
        &\quad \times \frac{\partial^r}{\partial s^r}\left(\sum_{j=0}^{k-2} \frac{\gamma^{(j+2)}(t)\cdot \nv_{\gamma (t)}}{(j+2)!}(s-t)^j+\frac{R_k (t, s)\cdot \nv_{\gamma (t)}}{(t-s)^2}\right).
    \end{align*}
    Now we employ Lemma \ref{lemm_estimate_remainder} and Lemma \ref{lemm_inverse_difference_estimate} and obtain the estimate
    \begin{align*}
        &\left|\frac{\partial^l}{\partial s^l}L_1(\gamma (t), \gamma (s))\right| \\
        &\lesssim \sum_{r=0}^l
        \left|\frac{\partial^{l-r}}{\partial s^{l-r}}\frac{(t-s)^2}{|\gamma (t)-\gamma (s)|^2}\right| \\
        &\quad \times \left|\frac{\partial^r}{\partial s^r}\left(\sum_{j=0}^{k-2} \frac{\gamma^{(j+2)}(t)\cdot \nv_{\gamma (t)}}{(j+2)!}(s-t)^j+\frac{R_k (t, s)\cdot \nv_{\gamma (t)}}{(t-s)^2}\right)\right| \\
        &\lesssim \begin{cases}
            |t-s|^{k-l-2+\alpha} & \text{if } 0\leq l\leq k-1, \\
            (|t-s|^{-1+\alpha}+1+|t-s|^{-2+\alpha}) \lesssim |t-s|^{-2+\alpha} & \text{if } l=k.
        \end{cases}
    \end{align*}
    So, we have \eqref{lemm_tangential_derivative_ckb}. The estimate \eqref{eq_tangential_derivative_k_ckb} is an immediate consequence of \eqref{lemm_tangential_derivative_ckb} for $l=k$.
\end{proof}

\subsection{Sobolev condition and decay rates in 2D}

\begin{lemm}
    \label{lemm_sobolev_condition_ckb}
    If $\Omega$ be a bounded domain in $\Rbb^{2}$ with $C^{k, \alpha}$ boundary $\partial\Omega$ for some positive integer $k$ and $\alpha \in (0,1]$, then $L_1\in H^{0, \nu}(\partial\Omega \times \partial\Omega)$ for all $\nu\in [0, k+\alpha-3/2)$.
\end{lemm}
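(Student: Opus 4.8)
The plan is to use the natural isomorphism $H^{0,\nu}(\partial\Omega\times\partial\Omega)\simeq L^2(\partial\Omega,H^\nu(\partial\Omega))$ recorded in \eqref{eq_sobolev_double}, so that, exactly as in the proof of Lemma \ref{lemm_kernel_sobolev}, it suffices to establish the uniform bound $\sup_{x\in\partial\Omega}\|L_1(x,\cdot)\|_{H^\nu(\partial\Omega)}<\infty$ for $\nu\in[0,k+\alpha-3/2)$. One may assume $k+\alpha>3/2$, since otherwise the interval is empty and there is nothing to prove. Write $\nu=m+\beta$ with $m=\lfloor\nu\rfloor$ and $\beta\in[0,1)$; since $\nu<k+\alpha-3/2\le k-\tfrac12<k$, we have $m\in\{0,1,\dots,k-1\}$. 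By the equivalent norm \eqref{eq_sobolev}, $\|L_1(x,\cdot)\|_{H^\nu}^2$ is comparable to $\sum_{j=0}^m\|\partial_{\tau_y}^jL_1(x,\cdot)\|_{L^2(\partial\Omega)}^2$ plus, when $\beta>0$, the Gagliardo term $\iint_{\partial\Omega\times\partial\Omega}|y-y'|^{-1-2\beta}\,|\partial_{\tau_y}^mL_1(x,y)-\partial_{\tau_y}^mL_1(x,y')|^2\,d\sigma(y)\,d\sigma(y')$, and the goal is to bound these quantities uniformly in $x$.

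For the $L^2$-terms, \eqref{eq_tangential_derivative_k-1_ckb} yields $\|\partial_{\tau_y}^jL_1(x,\cdot)\|_{L^2}^2\lesssim\int_{\partial\Omega}|x-y|^{-2(2-k+j-\alpha)}\,d\sigma(y)$ (the integral being trivially finite when $2-k+j-\alpha\le 0$, since then $\partial_{\tau_y}^jL_1$ is bounded); as $\partial\Omega$ is one-dimensional this is finite uniformly in $x$ because $2(2-k+j-\alpha)\le 2(2-k+m-\alpha)<1$, the last inequality being exactly $m<k+\alpha-3/2$. For the Gagliardo term I would split $\partial\Omega\times\partial\Omega$ at $2|y-y'|=|x-y|$. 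On $\{2|y-y'|<|x-y|\}$ one uses $|\partial_{\tau_y}^mL_1(x,y)-\partial_{\tau_y}^mL_1(x,y')|\lesssim|y-y'|\,|x-y|^{-(3-k+m-\alpha)}$, which is \eqref{eq_tangential_derivative_k_ckb} when $m=k-1$ and, when $m<k-1$, follows from \eqref{eq_tangential_derivative_k-1_ckb} with $l=m+1$ by writing the difference as the integral of $\partial_{\tau_y}^{m+1}L_1(x,\cdot)$ along the short boundary arc joining $y$ and $y'$, on which $|x-z|\simeq|x-y|$; carrying out the inner integral over $|y-y'|<|x-y|/2$ (finite since $2\beta-1<1$, of size $\lesssim|x-y|^{2-2\beta}$) reduces this region to $\int_{\partial\Omega}|x-y|^{-2(2-k-\alpha+\nu)}\,d\sigma(y)$. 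On $\{2|y-y'|\ge|x-y|\}$ one has $|x-y|\lesssim|y-y'|$ and $|x-y'|\lesssim|y-y'|$; bounding the difference by $|\partial_{\tau_y}^mL_1(x,y)|+|\partial_{\tau_y}^mL_1(x,y')|$ with \eqref{eq_tangential_derivative_k-1_ckb} and performing the inner integral over $\{y':2|y-y'|\ge|x-y|\}$ (finite since $2\beta+1>1$, of size $\lesssim|x-y|^{-2\beta}$; symmetrically for the $y'$-factor) again reduces it to the same integral $\int_{\partial\Omega}|x-y|^{-2(2-k-\alpha+\nu)}\,d\sigma(y)$. Since $\partial\Omega$ is one-dimensional, this last integral is finite precisely when $2(2-k-\alpha+\nu)<1$, i.e. $\nu<k+\alpha-3/2$, which completes the estimate.

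There is no conceptual obstacle here; the argument is a routine but bookkeeping-heavy fractional-integration estimate, and the two points demanding care are: (i) propagating the difference estimate \eqref{eq_tangential_derivative_k_ckb}, stated only for $\partial_{\tau_y}^{k-1}L_1$, down to the order $m=\lfloor\nu\rfloor$ actually needed (done by one further differentiation plus integration along $\partial\Omega$, which is legitimate because $\partial_{\tau_y}^lL_1(x,\cdot)$ is genuinely singular only for $l$ close to $k$ and bounded for $l\le k-2$); and (ii) verifying that the exponent arithmetic in the $L^2$-terms and in both regions of the Gagliardo integral always collapses to the single threshold $\nu<k+\alpha-3/2$. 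One should also treat separately the elementary boundary situations: when $\beta=0$ only the $L^2$-terms are present, and when $k+\alpha\le 3/2$ the statement is vacuous.
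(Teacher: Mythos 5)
Your argument is correct, and the exponent bookkeeping does collapse to the single threshold $\nu<k+\alpha-3/2$ in every region, but it is not the route the paper takes. The paper splits according to the size of $\alpha$: for $0<\alpha\le 1/2$ it never touches a Gagliardo seminorm at intermediate order; instead it uses \eqref{eq_tangential_derivative_k-1_ckb} only through an $L^p$ bound, showing $L_1\in L^\infty_x(\partial\Omega, W^{k-1,p}_y(\partial\Omega))$ for all $p<1/(1-\alpha)$ and then invoking the Sobolev embedding $W^{k-1,p}(\partial\Omega)\hookrightarrow H^{k-1/2-1/p}(\partial\Omega)$, which sweeps out exactly the range $\nu<k+\alpha-3/2$; for $1/2<\alpha<1$ it does the direct Slobodeckij estimate, but only at order $k-1$ with fractional exponent $\nu\in[0,\alpha-1/2)$, where the needed difference bound is precisely \eqref{eq_tangential_derivative_k_ckb} and the computation is the one already carried out for \eqref{eq_kernel_sobolev_seminorm}. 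Your proof instead estimates the seminorm directly at order $m=\lfloor\nu\rfloor$ for all $\alpha$, which forces you to supply the extra ingredient the paper avoids: a difference estimate for $\partial_{\tau_y}^m L_1$ when $m<k-1$, which you legitimately obtain by integrating \eqref{eq_tangential_derivative_k-1_ckb} at order $m+1$ along the short boundary arc (for full rigor you should either adjust the splitting constant $2$ to absorb the chord--arc constant of $\partial\Omega$, or note that on the relevant arc $|x-z|\ge|x-y|-C|y-y'|\gtrsim|x-y|$ after such an adjustment; this is cosmetic and at the same level of care as the paper's own near/far splittings). The trade-off: the paper's $L^p$-plus-embedding route is shorter and needs no intermediate-order difference estimates, at the price of importing the Sobolev embedding theorem; your unified Gagliardo computation is more self-contained, treats all $\alpha\in(0,1]$ at once (the paper's two cases as written even leave $\alpha=1$ to be recovered by monotonicity in $\alpha$), and mirrors the scheme already used in Lemma \ref{lemm_kernel_sobolev}.
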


\begin{proof}
    Suppose first that $0<\alpha \le 1/2$. If $p\in [1, 1/(1-\alpha))$, then
    \[
        \sup_{x\in \partial\Omega} \|\partial_{\tau_y}^{k-1} L_1(x, y)\|_{L^p (\partial\Omega)}^p
        \leq C\sup_{x\in \partial\Omega} \int_{\partial\Omega} \frac{\df \sigma (y)}{|x-y|^{p(1-\alpha)}}<\infty.
    \]
    We also have
    \[
        \sup_{x\in \partial\Omega} \|\partial_{\tau_y}^{k-1}L_1(x, y)\|_{L^p (\partial\Omega)}^p <\infty.
    \]
    Thus we have
    \begin{equation}
        \label{eq_nps_kernel_sobolev_p_ckb}
        L_1\in L^\infty_x (\partial\Omega, W^{k-1, p}_y (\partial\Omega)), \quad \forall p\in [1, 1/(1-\alpha)).
    \end{equation}
    By the Sobolev embedding theorem, we have the continuous embedding $W^{k-1, p}(\partial\Omega) \hookrightarrow H^\nu (\partial\Omega)$ for $\nu =k-1/2-1/p$. Hence we have $L_1\in L^\infty_x (\partial\Omega, H^\nu_y (\partial\Omega))$ for $\nu = k-1/2-1/p$. Here we remark that the condition $\nu\leq k-1$ is automatically satisfied by the assumption $0<\alpha\leq 1/2$. Since $p\in [1, 1/(1-\alpha))$ is arbitrary, we have $L_1\in L^\infty_x (\partial\Omega, H^\nu_y (\partial\Omega))$ for all $0\leq \nu <k+\alpha-3/2$. By the continuous embedding $L^\infty (\partial\Omega) \hookrightarrow L^2(\partial\Omega)$, we obtain $L_1\in L^2_x(\partial\Omega, H^\nu_y (\partial\Omega))=H^{0, \nu} (\partial\Omega \times \partial\Omega)$.

    Next we look into the case when $1/2<\alpha<1$. We prove that $\partial_{\tau_y}^{k-1} L_1(x, y)$ belong to $H^{0, \nu}(\partial\Omega \times \partial\Omega)$ for all $\nu\in [0, \alpha-1/2)$.
    Since $L_1\in L^\infty_x (\partial\Omega, H^{k-1}_y (\partial\Omega))$ by \eqref{eq_tangential_derivative_k-1_ckb}, it suffices to prove (by \eqref{eq_sobolev}) that
    \begin{equation}
        \label{eq_slobodeckij_2_ckb}
        \sup_{x\in \partial\Omega} \int_{\partial\Omega\times \partial\Omega} \frac{|\partial_{\tau_y}^{k-1} L_1(x, y)-\partial_{\tau_y}^{k-1} L_1(x, y^\prime)|^2}{|y-y^\prime|^{2\nu+1}}\, \df \sigma (y)\df\sigma (y^\prime)<\infty
    \end{equation}
    for all $\nu\in [0, \alpha-1/2)$.  This can be proved in the exactly same way as \eqref{eq_kernel_sobolev_seminorm}.
\end{proof}

\begin{proof}[Proof of Theorem \ref{theo_eigenvalue_decay_2d_smoother}]
    Thanks to Theorem \ref{theo_Delgado_Ruzhansky}, Lemma \ref{lemm_sobolev_condition_ckb} implies that $\Kcal^*\in \Scal^p$ for all $p>1/(k+\alpha-1)$. Thus we obtain \eqref{700} by Lemma \ref{lemm_schatten_decay}.
\end{proof}

\appendix

\section{Regularity properties of the NP operator}\label{appendix}

The purpose of this section is to give a proof of the fact that if $\partial\Omega$ is $C^{1, \alpha}$ for some $\alpha>0$, then the NP operator $\Kcal^*$ is compact operator on $H^{-1/2}(\partial\Omega)$. We also prove that every eigenfunction of $\Kcal^*$ in $H^{-1/2}(\partial\Omega)$ corresponding to a non-zero eigenvalue belongs to $L^2 (\partial\Omega)$.

Let $\Kcal$ be an integral operator 
\begin{equation}
    \label{eq_np_ns}
    \Kcal [f](x):=-\frac{1}{\omega_{d+1}}\mathrm{p.v.}\int_{\partial\Omega} \frac{(x-y)\cdot \nv_y}{|x-y|^{d+1}}f(y)\, \df \sigma (y) \quad (x\in \partial\Omega), 
\end{equation}
which is also called Neumann-Poincar\'e operator and satisfies the duality 
\[
    \jbk{f, \Kcal^*[g]}_{L^2(\partial\Omega)}=\jbk{\Kcal[f], g}_{L^2(\partial\Omega)}
\]
for all $f, g\in L^2(\partial\Omega)$. We begin with the following theorem. Let $\Omega\subset \Rbb^{d+1}$ be a bounded domain.

\begin{theo}\label{theo_NP_bounded_Sobolev}
    Let $0<\alpha<1$ and $\partial\Omega$ be $C^{1, \alpha}$. Then $\Kcal$ is a bounded operator from $H^s(\partial\Omega)$ into $H^{s+\alpha} (\partial\Omega)$ if $0 <s<(1-\alpha)/2$ and from $L^2(\partial\Omega)$ into $H^{\alpha-\varepsilon} (\partial\Omega)$ for any $\varepsilon >0$.
\end{theo}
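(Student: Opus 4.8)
The plan is to establish the mapping property of $\Kcal$ by exploiting a decomposition of its kernel into a principal part that coincides (up to a harmless smooth factor) with a classical singular integral operator of convolution type, plus a remainder whose kernel has improved homogeneity coming from the $C^{1,\alpha}$ regularity of $\partial\Omega$. Concretely, writing $K(x,y) = -\frac{1}{\omega_{d+1}}\frac{(x-y)\cdot\nv_y}{|x-y|^{d+1}}$, the key pointwise observation (already encoded in \eqref{500} for the adjoint kernel, and which we would record here for $K$ itself) is that $(x-y)\cdot\nv_y = O(|x-y|^{1+\alpha})$ because $\partial\Omega$ is $C^{1,\alpha}$, so that $|K(x,y)|\lesssim |x-y|^{-d+\alpha}$. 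Thus $\Kcal$ is, modulo lower-order terms, an operator with a weakly singular kernel of order $d-\alpha$, i.e. a fractional-integration-type operator of order $\alpha$, and the heart of the matter is to show that such an operator gains $\alpha$ derivatives on the $d$-dimensional manifold $\partial\Omega$.

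**First I would** reduce to local coordinates: fix a finite atlas of $\partial\Omega$ by $C^{1,\alpha}$ charts, insert a subordinate partition of unity, and study each localized piece $\chi_i \Kcal \chi_j$ as an integral operator on a ball in $\Rbb^d$. For the off-diagonal pieces ($\mathrm{supp}\,\chi_i \cap \mathrm{supp}\,\chi_j = \emptyset$) the kernel is smooth and the operator is trivially smoothing, so it maps into $C^\infty$ and in particular into every $H^{s+\alpha}$. For the diagonal pieces, after pulling back by the chart the kernel is estimated by $|u-v|^{-d+\alpha}$ for $u,v$ in a coordinate ball, and one must show such an operator maps $H^s\to H^{s+\alpha}$ for $0<s<(1-\alpha)/2$. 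The cleanest route is to compare with the Bessel potential / Riesz potential $I_\alpha$ of order $\alpha$ on $\Rbb^d$, whose kernel is comparable to $|u-v|^{-d+\alpha}$ and which is known to be bounded $H^s\to H^{s+\alpha}$ for all real $s$; the difference operator has a kernel that is still $O(|u-v|^{-d+\alpha})$ but is additionally Hölder in the variables with a gain, and one handles it either by the kernel criteria for Sobolev boundedness (as in the style of Lemma \ref{lemm_kernel_sobolev} via difference estimates on the kernel, using the $C^{1,\alpha}$ bound on $\nv_y$) or by an explicit $T(1)$-type argument.

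**The second ingredient** is the $L^2\to H^{\alpha-\ve}$ statement, which is the endpoint $s=0$ and is slightly outside the range $0<s<(1-\alpha)/2$; here I would argue directly from the weakly singular kernel bound $|K(x,y)|\lesssim |x-y|^{-d+\alpha}$ together with a kernel-difference estimate of the form $|K(x,y)-K(x,y')|\lesssim |y-y'|^{\beta}|x-y|^{-d+\alpha-\beta}$ for suitable $\beta\le\alpha$ (again a consequence of the $C^{1,\alpha}$ regularity, proved exactly as in Lemma \ref{lemm_L1_kernel_estimate}), and then invoke the Slobodeckij characterization \eqref{eq_sobolev} of $H^{\alpha-\ve}$: one estimates $\|\Kcal f\|_{L^2}$ by Schur's test (using Lemma \ref{lemm_convolution_sing} with $\beta=0$) and the Gagliardo seminorm $\iint |(\Kcal f)(x)-(\Kcal f)(x')|^2 |x-x'|^{-d-2(\alpha-\ve)}\,d\sigma\,d\sigma$ by splitting the domain of $x'$ into $|x-x'|$ small versus large, exactly mirroring the decomposition used in the proof of Lemma \ref{lemm_kernel_sobolev}. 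The loss of $\ve$ is precisely the failure of the borderline Hölder-to-Sobolev gain to be sharp at $L^2$.

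**The main obstacle** I anticipate is the bookkeeping of the chart pullback: a $C^{1,\alpha}$ change of variables does \emph{not} preserve $H^s$ for large $s$ — only for $|s|<1+\alpha$ roughly — which is exactly why the hypothesis is restricted to $0<s<(1-\alpha)/2$ rather than all $s\ge 0$, and one must check that both the source regularity $s$ and the target regularity $s+\alpha$ stay within the range for which the coordinate change is an isomorphism on Sobolev spaces (here $s+\alpha<(1+\alpha)/2<1+\alpha$, so it is fine, but the verification is what pins down the numerology $(1-\alpha)/2$). A secondary technical point is the principal-value nature of $\Kcal$: on $C^{1,\alpha}$ boundaries the integral in \eqref{eq_np_ns} is actually absolutely convergent since the kernel is only weakly singular ($-d+\alpha>-d$), so no genuine Calderón–Zygmund theory is needed and the "p.v." is a convenience of notation — I would remark on this early to simplify the rest. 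Everything else is the routine splitting-and-Schur-test machinery already exhibited in Section \ref{theorem1.1}.
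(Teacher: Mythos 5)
The core of your plan has a genuine gap. It rests on two claims that fail for this kernel. First, on a $C^{1,\alpha}$ boundary the kernel $K(x,y)=-\frac{1}{\omega_{d+1}}\frac{(x-y)\cdot\nu_y}{|x-y|^{d+1}}$ does \emph{not} split into a (smooth multiple of a) Riesz-potential kernel plus a remainder of improved homogeneity: the numerator vanishes to order $1+\alpha$ but has no leading homogeneous part, so $K$ minus any multiple of $|x-y|^{-d+\alpha}$ is still exactly of size $|x-y|^{-d+\alpha}$; the comparison with $I_\alpha$ therefore does not reduce the problem at all. Second, and more seriously, the step ``an operator with kernel $O(|u-v|^{-d+\alpha})$ maps $H^s\to H^{s+\alpha}$'' is false for $s>0$, even if you add the first-variable difference estimate $|K(x,y)-K(x',y)|\lesssim |x-x'|\,|x-y|^{-(d+1-\alpha)}$: take $k(x,y)=|x-y|^{-d+\alpha}g(y)$ with $g$ bounded but rough; it satisfies both estimates, yet $Tf=I_\alpha(gf)$ lands only in $H^\alpha$ for $f\in H^s$, not in $H^{s+\alpha}$. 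Size and $x$-regularity of the kernel give you only the endpoint statement $L^2\to H^{\alpha-\ve}$ (your last ingredient, which is fine and essentially the same Slobodeckij-seminorm splitting the paper uses); to gain $\alpha$ derivatives \emph{on top of} $H^s$ you must transfer the smoothness of $f$ through the operator, and for that a cancellation is indispensable.

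That cancellation is the missing idea, and it is where the paper's proof actually lives: since $\Kcal[1]=1/2$, one can write $\Kcal[f](x)-\Kcal[f](x')=\int_{\partial\Omega}\bigl(K(x,y)-K(x',y)\bigr)\bigl(f(y)-f(x)\bigr)\,\df\sigma(y)$, so that after the near/far splitting in $|x-x'|$ versus $|x-y|$ and Cauchy--Schwarz, it is the Gagliardo seminorm of $f$ in $H^s$ that controls the $H^{s+\alpha}$ seminorm of $\Kcal[f]$. This is the honest version of the ``$T(1)$-type argument'' you mention in passing but never develop, and your primary route (Riesz-potential comparison plus kernel criteria for the remainder) would not recover it. Once this identity is used, no local charts or Bessel potentials are needed -- the paper works directly with the intrinsic norm \eqref{eq_sobolev} on $\partial\Omega$ -- and your diagnosis of the threshold is also off: the restriction $s<(1-\alpha)/2$ does not come from $C^{1,\alpha}$ chart-pullback losses but from integrating the difference estimate against $|x-x'|^{-d-2(s+\alpha)}$ over the region $2|x-x'|<|x-y|$, which requires $2s+\alpha-1<0$.
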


\begin{proof}
    Let $f \in H^s (\partial\Omega)$ for some $s \in [0, (1-\alpha)/2)$. Let $\varepsilon$ be any number such that $0< \varepsilon <\alpha/2$ and define the number $t$ by
    \[
    t=
    \begin{cases}
    s \quad &\text{if } s \in (0, (1-\alpha)/2), \\
    -\varepsilon  &\text{if } s=0.
    \end{cases}
    \]
    We prove that 
    \[
    \| \Kcal[f] \|_{H^{t+\alpha}(\partial\Omega)} \lesssim  \| f \|_{H^s(\partial\Omega)}.
    \]
    Since $\| \Kcal[f] \|_{L^2(\partial\Omega)} \lesssim  \| f \|_{H^s(\partial\Omega)}$, it suffice to prove
    \begin{equation}\label{10000}
        I:= \int_{\partial\Omega\times \partial\Omega} \frac{|\Kcal[f] (y)-\Kcal[f] (x^\prime)|^2}{|x-x^\prime|^{2(t+\alpha)+d}}\, \df \sigma (x)\df \sigma (x^\prime) \lesssim \| f \|_{H^s(\partial\Omega)}
    \end{equation}
    by \eqref{eq_sobolev}. 
    
    For ease of notation let $K(x,y)$ be the integral kernel of $\Kcal$, namely,
    \[
    K(x,y)= \frac{1}{\omega_{d+1}} \frac{(x-y)\cdot \nv_y}{|x-y|^{d+1}}.
    \]
    Since $\Kcal[1]=1/2$, we have
    \begin{align*}
        \Kcal[f](x)-\Kcal[f](x^\prime)
        &=\int_{\partial\Omega}\left(K(x,y)- K(x^\prime,y) \right)(f (y)-f (x))\, \df \sigma (y) \\
        &=\int_{2|x-x^\prime|<|x-y|} + \int_{2|x-x^\prime|\geq |x-y|}=: J_1+J_2,
    \end{align*}
    so that
    \[
        I \lesssim \sum_{j=1}^2 \int_{\partial\Omega\times \partial\Omega} \frac{|J_j|^2}{|x-y|^{2(t+\alpha)+d}}\, \df \sigma (x)\df \sigma (y) :=I_1+I_2.
    \]

    Following the same lines of the proof of Lemma \ref{lemm_L1_kernel_estimate}, we have
    \[
        \left| K(x,y)- K(x^\prime,y) \right|\lesssim \frac{|x-x^\prime|}{|x-y|^{d+1-\alpha}}
    \]
    for all $x, x^\prime, y\in \partial\Omega$ with $2|x-x^\prime|<|x-y|$. If we set $A_1:= \{y \mid 2|x-x^\prime|<|x-y| \}$, then we have
    \[
    |J_1| \lesssim \int_{A_1}\frac{|x-x^\prime|}{|x-y|^{d+1-\alpha}}|f (y)-f (x)|\, \df \sigma (y).
    \]
    We apply the Cauchy-Schwarz inequality to have
    \begin{align*}
        |J_1|^2 &\lesssim |x-x^\prime|^2\left( \int_{A_1}\frac{\df \sigma (y)}{|x-y|^{d+1-\alpha}}\right) \left( \int_{A_1}\frac{|f (y)-f(x)|^2\, \df \sigma (y)}{|x-y|^{d+1-\alpha}}\right) \\
        &\lesssim |x-x^\prime|^{1+\alpha}\int_{A_1}\frac{|f (y)-f (x)|^2}{|x-y|^{d+1-\alpha}}\, \df \sigma (y).
    \end{align*}
    Since $t<(1-\alpha)/2$, we have
    \begin{align*}
        I_1& \lesssim\int_{\partial\Omega\times \partial\Omega}\df \sigma (x)\df \sigma (y)\, \frac{|f (y)-f (x)|^2}{|x-y|^{d-\alpha}}\int_{2|x-x^\prime|<|x-y|}\frac{\df \sigma (x^\prime)}{|x-x^\prime|^{2t+\alpha+d-1}} \\
        &\lesssim \int_{\partial\Omega\times \partial\Omega} \frac{|f (y)-f (x)|^2}{|x-y|^{d+2t}}\, \df \sigma (x)\df \sigma (y) \\
        &\leq \|f\|_{H^{s}(\partial\Omega)}^2.
    \end{align*}

    Set $A_2:= \{y \mid 2|x-x^\prime| \ge |x-y| \}$. For $J_2$, we have
    \begin{align*}
        |J_2|&\lesssim \int_{A_2}\left(|K(x,y)|+|K(x^\prime,y)| \right)|f (y)-f (x)|\, \df \sigma (y) \\
        &\lesssim \int_{A_2}\left(\frac{1}{|x-y|^{d-\alpha}}+\frac{1}{|x^\prime-y|^{d-\alpha}} \right)|f (y)-f (x)|\, \df \sigma (y).
    \end{align*}
    We then apply the Cauchy-Schwarz inequality to have
    \begin{align*}
        |J_2|^2&\lesssim \left(\int_{A_2}\frac{\df \sigma (y)}{|x-y|^{d-\alpha}}\right)\left(\int_{A_2}\frac{|f (y)-f (x)|^2\, \df \sigma (y)}{|x-y|^{d-\alpha}}\right) \\
        &\quad + \left(\int_{A_2}\frac{\df \sigma (y)}{|x^\prime-y|^{d-\alpha}}\right) \left(\int_{A_2|}\frac{|f (y)-f (x^\prime)|^2\, \df \sigma (y)}{|x^\prime-y|^{d-\alpha}} \right) \\
        &\quad + \left(\int_{A_2}\frac{\df \sigma (y)}{|x^\prime-y|^{d-\alpha}}\right)^2  |f (x)-f (x^\prime)|^2.
    \end{align*}
    If $y \in A_2$, then $|x^\prime-y|\leq |x-x^\prime|+|x-y|\leq 3|x-x^\prime|$. Thus we have
    \begin{align*}
        |J_2|^2 &\lesssim |x-x^\prime|^\alpha\int_{2|x-x^\prime|\geq |x-y|}\frac{|f (y)-f (x)|^2\, \df \sigma (y)}{|x-y|^{d-\alpha}} \\
        &\quad +|x-x^\prime|^\alpha\int_{3|x-x^\prime|\geq |x^\prime-y|}\frac{|f (y)-f (x^\prime)|^2\, \df \sigma (y)}{|x^\prime-y|^{d-\alpha}} \\
        &\quad +|x-x^\prime|^{2\alpha} |f (x)-f (x^\prime)|^2 =: \sum_{j=1}^3 J_{2,j},
    \end{align*}
    and hence
    \[
    I_2 \lesssim \sum_{j=1}^3 I_{2,j},
    \]
    where the definition $I_{2,j}$ is obvious. For $I_{2,1}$, we have
    \begin{align*}
        I_{2,1} & \lesssim \int_{\partial\Omega\times \partial\Omega}\df \sigma (x)\df \sigma (y)\, \frac{|f (y)-f (x)|^2}{|x-y|^{d-\alpha}}\int_{2|x-x^\prime|\geq |x-y|}\frac{\df \sigma (x^\prime)}{|x-x^\prime|^{2t+\alpha+d}} \\
        &\lesssim \int_{\partial\Omega\times \partial\Omega}\frac{|f (y)-f (x)|^2}{|x-y|^{d+2t}}\, \df \sigma (x)\df \sigma (y)
        \lesssim \| f \|_{H^{s}(\partial\Omega)}^2,
    \end{align*}
    where the second inequality holds since $t \ge -\varepsilon > -\alpha/2$. One can estimate $I_{2,2}$ similarly to have
    \[
    I_{2,2} \lesssim \| f \|_{H^{s}(\partial\Omega)}^2.
    \]
    We also have
    \[
    I_{2,3} \lesssim \int_{\partial\Omega\times \partial\Omega}\frac{|f (y)-f (x)|^2}{|x-y|^{d+2t}}\, \df \sigma (x)\df \sigma (y)
        \lesssim \| f \|_{H^{s}(\partial\Omega)}^2.
    \]
    So we have \eqref{10000} and the proof is complete.
\end{proof}

\begin{coro}
    \label{coro_NP_compact_Sobolev}
    If $\partial\Omega$ is $C^{1, \alpha}$ for some $\alpha>0$, then the NP operators $\Kcal$ and $\Kcal^*$ are compact operators on $H^{1/2}(\partial\Omega)$ and $H^{-1/2}(\partial\Omega)$, respectively.
\end{coro}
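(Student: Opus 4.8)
The plan is to derive compactness of the NP operators from the smoothing estimate in Theorem~\ref{theo_NP_bounded_Sobolev}, the compact Sobolev embedding on the compact hypersurface $\partial\Omega$, and two applications of duality. \textbf{Step 1 (a gain of regularity for $\Kcal$).} Since Hölder classes are nested, $\partial\Omega$ is $C^{1,\beta}$ for $\beta:=\min\{\alpha,1/4\}$, and then $0<1/2-\beta<(1-\beta)/2$, so Theorem~\ref{theo_NP_bounded_Sobolev} (applied with $s=1/2-\beta$) shows that $\Kcal$ is bounded from $H^{1/2-\beta}(\partial\Omega)$ into $H^{1/2}(\partial\Omega)$.

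\textbf{Step 2 (transfer to $\Kcal^*$ on $H^{-1/2}$ by duality).} On the $C^{1,\beta}$ (in particular $C^1$) compact hypersurface $\partial\Omega$ the spaces $H^{s}(\partial\Omega)$ are well defined for $|s|\le 1$, $C^\infty(\partial\Omega)$ is dense in each of them, and the $L^2(\partial\Omega)$ pairing extends to the duality pairings between $H^{1/2}$ and $H^{-1/2}$ and between $H^{1/2-\beta}$ and $H^{-1/2+\beta}$. The identity $\jbk{\Kcal[f],g}_{L^2(\partial\Omega)}=\jbk{f,\Kcal^*[g]}_{L^2(\partial\Omega)}$, which holds for $f,g\in C^\infty(\partial\Omega)$, then identifies $\Kcal^*$ with the Banach-space adjoint of the operator in Step~1. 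Consequently $\Kcal^*$ is bounded from $H^{-1/2}(\partial\Omega)$ into $H^{-1/2+\beta}(\partial\Omega)$.

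\textbf{Step 3 (compactness).} By the Rellich–Kondrachov theorem on the compact manifold $\partial\Omega$, the inclusion $H^{-1/2+\beta}(\partial\Omega)\hookrightarrow H^{-1/2}(\partial\Omega)$ is compact; composing with the bounded map of Step~2 shows $\Kcal^*\colon H^{-1/2}(\partial\Omega)\to H^{-1/2}(\partial\Omega)$ is compact. Finally, applying the pairing identity once more, $\Kcal$ acting on $H^{1/2}(\partial\Omega)$ is the Banach-space adjoint of $\Kcal^*$ acting on $H^{-1/2}(\partial\Omega)$, and the adjoint of a compact operator is compact (Schauder's theorem); hence $\Kcal\colon H^{1/2}(\partial\Omega)\to H^{1/2}(\partial\Omega)$ is compact. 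Every step here is routine; the only point deserving care is the identification in Step~2 of the abstract adjoint of $\Kcal\colon H^{1/2-\beta}\to H^{1/2}$ with $\Kcal^*$ as a map between the negative-order Sobolev spaces, which follows by writing the pairing identity for smooth $f,g$ and passing to the limit by density, together with recalling that the $H^s(\partial\Omega)$-scale and Rellich's embedding are available for $|s|\le 1$ on a $C^{1,\beta}$ boundary.
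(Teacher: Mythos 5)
Your proof is correct and uses essentially the same ingredients as the paper: the smoothing estimate of Theorem~\ref{theo_NP_bounded_Sobolev}, the Rellich--Kondrachov embedding on $\partial\Omega$, and duality with respect to the $L^2$ pairing. The only difference is the order of operations --- the paper first obtains compactness of $\Kcal$ on $H^{1/2}(\partial\Omega)$ by sandwiching the bounded map $\Kcal\colon H^s\to H^{s+\alpha}$ between a compact and a continuous embedding and then dualizes, whereas you dualize the boundedness first to get $\Kcal^*\colon H^{-1/2}\to H^{-1/2+\beta}$, apply Rellich on the negative-order side, and recover $\Kcal$ by Schauder's theorem; both routes are valid.
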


\begin{proof}
    We take $s\in (\max\{0, 1/2-\alpha\}, (1-\alpha)/2)$. Then we apply Theorem \ref{theo_NP_bounded_Sobolev} to obtain the boundedness of $\Kcal: H^s(\partial\Omega)\to H^{s+\alpha}(\partial\Omega)$. Since $s<(1-\alpha)/2<1/2$, the inclusion $H^{1/2}(\partial\Omega)\hookrightarrow H^s(\partial\Omega)$ is compact by the Rellich-Kondrachov embedding theorem. Since $s+\alpha> 1/2$, we have the continuous (actually, compact) embedding $H^{s+\alpha}(\partial\Omega)\hookrightarrow H^{1/2}(\partial\Omega)$. Thus, the composition
    \[
        H^{1/2}(\partial\Omega) \longhookrightarrow H^s(\partial\Omega) \overset{\Kcal}{\longrightarrow} H^{s+\alpha}(\partial\Omega)\longhookrightarrow H^{1/2}(\partial\Omega)
    \]
    gives a compact operator on $H^{1/2}(\partial\Omega)$.

    By the duality, $\Kcal^*: H^{-1/2}(\partial\Omega)\to H^{-1/2}(\partial\Omega)$ is compact.
\end{proof}

\begin{coro}\label{coro_eigenfunction_smooth}
    Suppose that $\partial\Omega$ is $C^{1, \alpha}$ for some $\alpha>0$.
    \begin{enumerate}[label=(\roman*)]
        \item \label{enum_eigenfunction_smooth_np} Every eigenfunction of $\Kcal$ in $L^2(\partial\Omega)$ corresponding to a non-zero eigenvalue belongs to $H^{1/2} (\partial\Omega)$.
        \item \label{enum_eigenfunction_smooth_nps}Every eigenfunction of $\Kcal^*$ in $H^{-1/2}(\partial\Omega)$ corresponding to a non-zero eigenvalue belongs to $L^2 (\partial\Omega)$.
    \end{enumerate}
\end{coro}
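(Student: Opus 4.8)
The plan is to prove both parts by an elliptic-type bootstrap built on the Sobolev mapping properties of Theorem~\ref{theo_NP_bounded_Sobolev}, using the elementary observation that an eigenfunction with non-zero eigenvalue $\lambda$ equals $\lambda^{-1}$ times its own image, so that every application of $\Kcal$ (resp.\ $\Kcal^*$) transfers extra smoothness back onto the eigenfunction. Since $C^{1,1}\subset C^{1,\alpha}$ for every $\alpha<1$, we may and do assume $0<\alpha<1$, so that Theorem~\ref{theo_NP_bounded_Sobolev} is applicable.

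For part (i), let $f\in L^2(\partial\Omega)$ satisfy $\Kcal[f]=\lambda f$ with $\lambda\neq 0$. The second assertion of Theorem~\ref{theo_NP_bounded_Sobolev} gives $\Kcal[f]\in H^{\alpha-\varepsilon}(\partial\Omega)$ for every $\varepsilon>0$, hence $f=\lambda^{-1}\Kcal[f]\in H^{s}(\partial\Omega)$ for all $s<\alpha$. I would then iterate using the first assertion: whenever $f\in H^{s}(\partial\Omega)$ for all $s$ below a threshold $a\le(1-\alpha)/2$, boundedness of $\Kcal\colon H^{s}\to H^{s+\alpha}$ yields $f=\lambda^{-1}\Kcal[f]\in H^{s+\alpha}(\partial\Omega)$ for all $s<a$, improving the threshold to $a+\alpha$. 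Each step raises the threshold by the fixed positive amount $\alpha$, so after finitely many steps the threshold exceeds $(1-\alpha)/2$; one further application with exponents $s<(1-\alpha)/2$ then puts $f$ in $H^{s}(\partial\Omega)$ for all $s<(1-\alpha)/2+\alpha=(1+\alpha)/2$, and since $(1+\alpha)/2>1/2$ this gives $f\in H^{1/2}(\partial\Omega)$. (One uses here the routine fact that the operator defined on $L^2$ by \eqref{eq_np_ns} agrees on $H^{s}\subset L^2$ with the operator of Theorem~\ref{theo_NP_bounded_Sobolev}, so the identity $\Kcal[f]=\lambda f$ may be read in either space.)

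For part (ii), let $g\in H^{-1/2}(\partial\Omega)$ satisfy $\Kcal^*[g]=\lambda g$ with $\lambda\neq 0$. Dualizing Theorem~\ref{theo_NP_bounded_Sobolev} with respect to the $L^2$-pairing, $\Kcal^*$ maps $H^{-s-\alpha}(\partial\Omega)$ boundedly into $H^{-s}(\partial\Omega)$ for $0<s<(1-\alpha)/2$ and $H^{-\alpha+\varepsilon}(\partial\Omega)$ boundedly into $L^2(\partial\Omega)$ for every $\varepsilon>0$; these extensions are mutually consistent because all realizations of $\Kcal^*$ coincide on the dense subspace $L^2(\partial\Omega)$. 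Writing $g=\lambda^{-1}\Kcal^*[g]$ and feeding in the first bound repeatedly, the negative Sobolev index of $g$ improves by $\alpha$ at each pass, so after finitely many passes $g\in H^{-u}(\partial\Omega)$ for some $u<\alpha$; a final use of the $L^2$-mapping property then gives $g=\lambda^{-1}\Kcal^*[g]\in L^2(\partial\Omega)$.

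The content is regularity, not compactness, so nothing beyond Theorem~\ref{theo_NP_bounded_Sobolev} and its dual is required. The only real obstacle is endpoint bookkeeping: Theorem~\ref{theo_NP_bounded_Sobolev} offers only open ranges of exponents, and the $L^2\to H^{\alpha}$ estimate loses an arbitrarily small $\varepsilon$. This is handled by always running the iteration with strictly sub-critical exponents (that is, $s<(1-\alpha)/2$, equivalently index magnitude strictly above $\alpha$ in part (ii)), and invoking the $\varepsilon$-lossy $L^2$-endpoint estimate only at the initial step in part (i) and the final step in part (ii). Because every pass gains the fixed positive amount $\alpha$ of derivatives, the procedure terminates after finitely many steps, no matter how small $\alpha$ is.
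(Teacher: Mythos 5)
Your proposal is correct and follows essentially the same route as the paper: bootstrap the eigenvalue identity $f=\lambda^{-1}\Kcal[f]$ (resp.\ $g=\lambda^{-1}\Kcal^*[g]$) through the mapping properties of Theorem~\ref{theo_NP_bounded_Sobolev} and their $L^2$-duals, with your write-up in fact more explicit about the open exponent ranges than the paper's. The only caveat is in part (ii): since the dual bound gains exactly $\alpha$ per pass from the fixed starting space $H^{-1/2}(\partial\Omega)$, the iteration can land exactly on $H^{-\alpha}(\partial\Omega)$ (e.g.\ when $\alpha=1/(2(k+1))$), where neither stated bound applies; this is repaired by replacing $\alpha$ with a slightly smaller H\"older exponent (or embedding into a marginally larger negative-order space at one step), the same monotonicity reduction you already invoke for $\alpha=1$.
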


\begin{proof}
    \ref{enum_eigenfunction_smooth_np} Let $f\in L^2(\partial\Omega)$ satisfy $\Kcal[f]=\lambda f$ with $\lambda\neq 0$. Then, by Theorem \ref{theo_NP_bounded_Sobolev}, we have
    \[
        f=\lambda^{-1}\Kcal [f]\in H^{\alpha/2}(\partial\Omega).
    \]
    We employ \ref{theo_NP_bounded_Sobolev} again and obtain
    \[
        f=\lambda^{-1}\Kcal [f]\in H^{\min \{1/2, 3\alpha/2\}}(\partial\Omega).
    \]
    We iterate this procedure and obtain $f\in H^{1/2}(\partial\Omega)$.

    \freespace\noindent\ref{enum_eigenfunction_smooth_nps} Theorem \ref{theo_NP_bounded_Sobolev} implies by duality that the operator $\Kcal^*$ is bounded from $H^{-s-\alpha}(\partial\Omega)$ into $H^{-s}(\partial\Omega)$ if $0<s<(1-\alpha)/2$ and from $H^{-\alpha+\varepsilon}(\partial\Omega)$ into $L^2(\partial\Omega)$ for any $\varepsilon >0$. Thus the same argument as in \ref{enum_eigenfunction_smooth_np} shows the assertion.
\end{proof}


\end{document}